\DeclareMathOperator{\R}{\mathbb{R}}
\DeclareMathOperator{\A}{\mathcal{A}}
\DeclareMathOperator*{\argmin}{arg\,min}
\newcommand{\I}{{\mathcal I}}
\DeclareMathOperator*{\medcup}{\mathbin{\scalebox{0.85}{\ensuremath{\bigcup}}}}
\newcommand{\cI}{{\mathcal I}}
\newcommand{\cJ}{{\mathcal J}}
\newcommand{\cA}{{\mathcal A}}
\newcommand{\ccB}{{\mathcal B}}
\newcommand{\cM}{{\mathcal M}}
\newcommand{\cT}{{\mathcal T}}
\renewcommand{\phi}{\varphi}
\newtheorem{remark}{\textbf{Remark}}
\newtheorem{theorem}{\textbf{Theorem}}
\newtheorem{proposition}{\textbf{Proposition}}
\newtheorem{definition}{\textbf{Definition}}
\newtheorem{example}{\textit{Example}}
\begin{document}




\title{Domain decomposition based parallel Howard's algorithm}


\author{Adriano Festa, \\
\'Ecole Nationale Sup\'erieure de
Techniques Avanc\'ees\\ ENSTA ParisTech\\
              \emph{adriano.festa@ensta.fr}             \\
             828, Boulevard des Maréchaux,\\
91120 Palaiseau}
\maketitle
\begin{abstract}
The Classic Howard's algorithm, a technique of resolution for discrete Hamil\-ton-Jacobi
equations, is of large
use in applications for its high efficiency and good performances. A
special beneficial characteristic of the method is the superlinear convergence which, in presence of a finite number of controls, is reached in finite time.
Performances of the method can be significantly improved by using
parallel computing; how to build a parallel version of method is not a trivial point, the difficulties come from the strict relation between various values of the solution,
even related to distant points of the domain.
In this contribution we propose a  parallel version of the
Howard's algorithm driven by an idea of domain decomposition. This permits
to derive some important properties and to prove the
convergence under quite standard assumptions. The good features of the algorithm will be shown through some tests
and examples.
\end{abstract}

\noindent {\bf Keywords:} Howard's algorithm (policy iterations), Parallel Computing, Domain Decomposition \\
\noindent
\noindent {\bf 2000 MSC:} 49M15, 65Y05, 65N55
\par
\medskip

\section{Introduction}
The \emph{Howard's algorithm} (also called \emph{policy iteration algorithm}) is a classical method for solving a discrete Hamilton-Jacobi equation. This technique, developed by Bellman and Howard \cite{B57,H60}, is of large use in applications, thanks to its good proprieties of efficiency and simplicity.\par

It was clear from the beginning that in presence of a space of controls with infinite elements, the convergence of the algorithm is comparable to Newton's method. This was shown under progressively more general assumptions \cite{PB79,SR04} until to \cite{BMZ09}, where using the concept of \emph{slant differentiability} introduced in \cite{Q93, QS93}, the technique can be shown to be of semi-smooth Newton's type, with all the good qualities in term of superlinear convergence and, in some cases of interest, even quadratic convergence. \par
In this paper, we propose a parallel version of the policy iteration algorithm, discussing the advantages and the weak points of such proposal.\par
In order to build such parallel algorithm, we will use a theoretical construction inspired by some recent results on domain decomposition (for example \cite{BBC13,RaoZidani,BBC14}). Anyway, for our purposes, thanks to a greater regularity of the Hamiltonian, the decomposition can be studied just using standard techniques. We will focus instead on convergence of the numerical iteration, discussing some sufficient conditions, the number of iteration necessary, the speed.\par
Parallel Computing applied to Hamilton Jacobi equations is a subject of actual interest  because of the strict limitation of classical techniques in real problems, where the memory storage restrictions and limits in the CPU speed, cause easily the infeasibility of the computation, even in cases relatively easy.  With the purpose to build a parallel solver, the main problem  to deal with is to manage the information passing through the threads. Our analysis is not the first contribution on the topic, but it is an original study of the specific possibilities offered by the Policy algorithm. In particular some non trivial questions are: is convergence always guaranteed? In finite time? With which rate? Which is the gain respect to (the already efficient) Classical Howard's Algorithm?\par
In literature, at our knowledge, the first parallel algorithm proposed was by Sun in 1993 \cite{S93} on the numerical solution of the Bellman equation related to an exit time problem for a diffusion process (i.e. for second order elliptic problems);  an immediately successive work is \cite{CFLS94} by Camilli, Falcone, Lanucara and Seghini, here an operator of the semiLagrangian kind is proposed and studied on the interfaces of splitting. More recently, the issue was discussed also by Zhou and Zhan \cite{ZZ03} where, passing to a quasi variational inequality formulation equivalent, there was possible a domain decomposition.\par
Our intention is to show a different way to approach the topic. Decomposing the problem directly in its differential form, effectively, it is possible to give an easy and consistent interpretation to the condition to impose on the boundaries of the sub-domains. Thereafter, passing to a discrete version of such decomposed problem it becomes relatively easy to show the convergence of the technique to the correct solution, avoiding the technical problems, elsewhere observed, about the manner to exchange information between the sub-domains. In our technique, as explained later, we will substitute it with the resolution of an auxiliary problem living in the interface of connection in the domain decomposition. In this way, data will be passed implicitly through the sub-problems. \par
The paper is structured as follows: in section 2 we recall the classic Howard's algorithm and the relation with the differential problem, focusing on the case of its Control Theory interpretation. In section 3, after discussing briefly the strategy of decomposition, we present the algorithm, and we study the convergence. Section 4 is dedicated to a presentation of the performances and to show the advantages with respect the non parallel version. We will end presenting some possible extensions of the technique to some problems of interest: reachability problems with obstacle avoidance, max-min problems.

\section{Classic Howard's algorithm}\label{s:1}
The problem considered is the following. Let be $\Omega$ bounded open  domain  of $\R^d$ ($d\geq1$);  the steady, first order,  \emph{Hamilton-Jacobi equation} (HJ) is:
\begin{equation}\label{hj}
\left\{\begin{array}{ll}
   \lambda v(x)+H(x,Dv(x))=0 & x\in \Omega, \\
  v(x)=g(x) & x\in\partial \Omega,
\end{array}\right.
\end{equation}
where, following its \emph{Optimal Control interpretation}, $\lambda\in\R^+$ is the \emph{discount factor},  $g:\Omega \to \R$ is the \emph{exit cost}, and the \emph{Hamiltonian} $H:\Omega\times \R^d \to \R$ is defined by: $H(x,p):=\inf_{\alpha\in\cA}\{-f(x,\alpha)\cdot p-l(x,\alpha)\}$ 
with $f:\Omega\times\cA\rightarrow \R$ (\emph{dynamics})  and $l:\Omega\times\cA\rightarrow \R$ (\emph{running cost}). The choice of such Hamiltonian is not restrictive but useful to simplify the presentation. As extension of the techniques we are going to present, it will be shown, in the dedicate section, as the same results can be obtained in presence of different kind of Hamiltonians, as in obstacle problems or in differential games.\par
Under classical assumptions on the data (for our purposes we can suppose $f(\cdot,\cdot)$ and $l(\cdot,\cdot)$ continuous, $f(\cdot,\alpha)$ and $l(\cdot,\alpha)$ Lipschitz continuous for all $\alpha\in\cA$ and verified the \emph{Soner's condition} \cite{S86}), it is known (see also \cite{BCD97}, \cite{E98}) that the equation \eqref{hj} admits a unique continuous solution $v:\overline\Omega\rightarrow \R$ in the \emph{viscosity solutions} sense. \par
The solution $v$ is the value function to the infinite horizon problem with exit cost, where $\tau_x$ is the \emph{first time of exit} form $\Omega$: 
$$\left\{
\begin{array}{l}
\displaystyle v(x)=\inf\limits_{a(\cdot) \in L^{\infty}([0,+\infty[;\cA)}\int\limits_0^{\tau_x(a)}l(y_x(s),a(s))e^{-\lambda s}\,ds+e^{-\lambda \tau_x(a)}g(y_x(\tau_x(a))), \\
\hbox{where $y_x(\cdot)$ is a.e. solution of } \quad \left\{\begin{array} {l}\dot{y}(t)=f(y(t),a(t))\\  y(0)=x \end{array}\right. .
\end{array}
\right. 
$$

Numerical schemes for approximation of such problem have been proposed from the early steps of the theory, let us mention the classical Finite Differences Schemes \cite{CL84,S85},  semiLagrangian \cite{FF13}, Discontinuous Galerkin \cite{CS07} and many others. \par
In this paper we will focus on a \emph{monotone}, \emph{consistent} and \emph{stable} scheme (class including the first two mentioned above), which will provide us the discrete problem where to apply the Howard's Algorithm.\par
Considered a discrete grid $G$ with $N$ points $x_j$, $j=1,...,N$ on the domain $\overline{\Omega}$, the finite $N$-dimensional approximation of $v$,  $V$, will be the solution of the following discrete equation  ($V_j=V(x_j)$)
\begin{equation}\label{scheme}
F^h_i(V_1,...,V_N)=F^h_i(V)=0, \qquad i\in\{1,...,N\},
\end{equation}
where $h:=\max diam S_j$, (maximal diameter of the family of simplices $S_j$ built on $G$) is the discretization step, and related to a subset of the $V_j$, there are included the Dirichlet conditions following the obvious pattern
$$ F^h_j(V_1,...,V_N):=g(x_j), \quad {x_j\in \partial \Omega}.$$
We will assume on $F$, some Hypotheses sufficient to ensure the convergence of the discretization
\begin{itemize}
\item[{\bf (H1*)}] \emph{Monotony.} For every choice of two vectors $V,W$ such that, $V\geq W$ (component-wise) then $F^h_i(V_1,...,V_N)\geq F^h_i(W_1,...,W_N)$ for all $i\in \{1,...,N\}$.
\smallskip
\item[{\bf (H2*)}] \emph{Stability.} If the data of the problem are finite, for every vector $V$, there exists a $C\geq 0$ such that $V$, solution of \eqref{scheme}, is bounded by $C$ i.e. $\|V\|_{\infty}=\max_{i=1,...,N}|V_i|\leq C$ independently from $h$.
\smallskip
\item[{\bf (H3)}] \emph{Consistency.}  This hypothesis, not necessary in the analysis of the convergence of the scheme, is essential to guarantee that the numerical solution obtained approximates the continuous solution.  It is assumed that $F^h_i(\phi(y_1)+\xi, ..., \phi(y_N)+\xi)\rightarrow \lambda \phi(x_i)+H(x_i,\phi(x_i), D\phi(x_i))$ for every $\phi\in C^1(\Omega)$, $x_i\in\Omega$, with $h\rightarrow 0^+$, $y_i\rightarrow x_i$, and $\xi\rightarrow 0^+$.
\end{itemize}

Under these assumptions it has been discussed and proved \cite{S85} that  $V$, solution of \eqref{scheme}, converges to $v$, viscosity solution of \eqref{hj} for $h\rightarrow 0$. \par

The special form of the Hamiltonian $H$ gives us a correspondent special structure of the scheme $F$, in particular, with a rearrangement of the terms, the discrete problem \eqref{scheme} can be written as a resolution of a nonlinear system in the following form:
\begin{equation}\label{HOW}
\hbox{Find }V\in\R^N;\quad\min_{\alpha\in\A^N}(B(\alpha)V-c_g(\alpha))=0,
\end{equation}
where  $B$ is a $N\times N$ matrix and $c_g$ is a $N$ vector. The name $c_g$ is chosen to underline (it will be important in the following) that such vector there are contained information about the Dirichlet conditions imposed on the boundaries.
The \emph{Policy Iteration Algorithm} (or Howard's Algorithm)  consists in a two-steps iteration with an alternating improvement of the policy and the value function, as shown in Table \ref{HA}.
%
%

It is by now known \cite{BMZ09} that under a monotonicity assumption on the matrices $B(\alpha)$, (we recall that a matrix is monotone if ans only if it is invertible and every element of its inverse are non negative), automatically derived from (H1*) (as shown below), the above algorithm is a non smooth Newton method that converges superlinearly to the discrete solution of problem. The convergence of the algorithm is also discussed in the earlier work \cite{SR04, PB79} where the results are given in a more regular framework.\par Additionally, if $\cA$ has a finite number of elements, and this is the standard case of a discretized space of the controls,  then the algorithm converges in a finite number of iterations. \par

Let us state, for a fixed vector $V\in\R^n$ the subspace of controls $\cA(V):=\argmin B(\alpha)V-c_g(\alpha)$

\begin{proposition}\label{p:1}
Let us assume the matrix $B(\alpha)$ is invertible. If (H1*) holds true, then $B(\alpha)$ is monotone and not null for every $\alpha\in \cA(V)$ with  $V\in\R^n$.\par
\end{proposition}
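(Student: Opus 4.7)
The plan is to combine the variational identity satisfied by the optimal control $\alpha\in\cA(V)$ with the pointwise monotonicity (H1*), closing with the invertibility hypothesis on $B(\alpha)$. The argument splits naturally into three steps, the last of which is where I expect the main technical obstacle.

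First, I unpack the definition of $\cA(V)$: for every index $i\in\{1,\dots,N\}$ one has the identity
\[
F^h_i(V)=[B(\alpha)V-c_g(\alpha)]_i,
\]
while for any other vector $W\in\R^N$ the suboptimality of $\alpha$ at $W$ yields
\[
F^h_i(W)\le [B(\alpha)W-c_g(\alpha)]_i = F^h_i(V)+[B(\alpha)(W-V)]_i.
\]
Next I feed (H1*) into this inequality through a perturbation argument. Given a non-negative vector $e\ge 0$ and a parameter $t>0$, the choice $W=V+te\ge V$ combined with (H1*) produces the chain
\[
0\le F^h_i(V+te)-F^h_i(V)\le t\,[B(\alpha)e]_i.
\]
Dividing by $t$ and specializing $e$ to the canonical basis vectors shows that every entry of $B(\alpha)$ is non-negative, and that $B(\alpha)$ maps the non-negative cone into itself.

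The third and most delicate step is to lift this entrywise positivity to the matrix monotonicity $B(\alpha)^{-1}\ge 0$. This is where I expect the main obstacle, because a generic invertible matrix with non-negative entries need not have a non-negative inverse. The argument must exploit the additional structure of $B(\alpha)$ inherited from the Hamilton--Jacobi discretization (positive diagonal together with weak diagonal dominance, placing it in the M-matrix class): combining this structural information with the inequality of step two, one argues by contradiction that $B(\alpha)X\ge 0$ forces $X\ge 0$, which is precisely the condition $B(\alpha)^{-1}\ge 0$. The non-nullity of $B(\alpha)$ is then immediate, since a zero matrix is singular, and the proposition follows for every $\alpha\in\cA(V)$.
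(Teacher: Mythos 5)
Your first two steps reproduce the paper's own argument exactly: using the optimality of $\alpha\in\cA(V)$ at $V$, its suboptimality at $W=V+te$, and (H1*), you obtain $B(\alpha)(W-V)\geq 0$ for every $W-V\geq 0$, hence $B(\alpha)e_i\geq 0$ for each canonical basis vector, i.e.\ entrywise nonnegativity. The difficulty is your third step, which is the entire content of the proposition and which you do not actually carry out. You are right that an invertible matrix with nonnegative entries need not have a nonnegative inverse, but the repair you sketch --- positive diagonal, weak diagonal dominance, membership in the M-matrix class --- imports structural information that is nowhere among the hypotheses (the proposition assumes only invertibility and (H1*)), and it sits badly with what you have just derived: an M-matrix has nonpositive off-diagonal entries, so a matrix that is simultaneously entrywise nonnegative and of M-type is diagonal, which would trivialise the discretizations of Examples \ref{ex1} and \ref{ex2}. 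As submitted, the proposal therefore has a genuine gap at the decisive step: the implication from $B(\alpha)z\geq 0$ for all $z\geq 0$ to $B(\alpha)^{-1}\geq 0$ is asserted to follow from structure you neither hypothesise nor verify.

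The paper finishes differently: it supposes for contradiction that the $i$-th column of $B^{-1}(\bar\alpha)$ has a negative entry, takes $W-V=e_i$ in the relation $B(\bar\alpha)(W-V)\geq 0$, and multiplies by $B^{-1}(\bar\alpha)$; no diagonal dominance or M-matrix property is invoked. Your instinct that something more is needed here is not unreasonable --- the passage from entrywise nonnegativity of $B$ to nonnegativity of $B^{-1}$ is exactly the delicate point, and the paper's one-line contradiction is extremely terse --- but a complete proof must either execute that contradiction argument in detail or provide a self-contained alternative using only the stated hypotheses. Pointing at unstated structure of the discretization does neither, so the proposal cannot be accepted as a proof of the proposition as stated.
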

\begin{proof}
For a positive vector $V$, consider a vector $W$ such that  $W-V\geq 0$ componentwise, then for H1*
$$ B(\bar \alpha) W-c_g(\bar \alpha)\geq \min_{\alpha\in\cA} B(\alpha) W-c_g(\alpha)\geq \min_{\alpha\in\cA} B(\alpha) V-c_g(\alpha)=B(\bar \alpha) V-c_g(\bar\alpha),$$
where $\bar \alpha\in \A(V)$, therefore
$$ B(\bar \alpha)(W-V)\geq 0. $$
Suppose now that the $i^{th}$ column of $B^{-1}(\bar \alpha)$ has a negative entry: choosing $W-V= e_i$ ($e_i$ $i^{th}$ column of the identity matrix) multiplying the previous relation for $B^{-1}(\bar \alpha)$ we have a contradiction. Then $B(\bar \alpha)$ is monotone.
\end{proof}

\begin{table}
\begin{center}
\line(1,0){380}\\
{\sc Howard's Algorithm (HA)}
\line(1,0){380}\\
\end{center}
Inputs: $B(\cdot)$, $c_g(\cdot)$. (Implicitly, the values of $V$ at the boundary points) \\
Initialize $V^0\in\R^N$ and $\alpha_0\in \cA^N$ \\
 Iterate $k\geq 0$:
\begin{itemize}
\item[i)]  Find $V^k\in\R^N$ solution of
$B(\alpha^k)V^k=c_g(\alpha^k)$.\\
If $k\geq 1$ and $V^k=V^{k-1}$, then stop. Otherwise go to (ii).
\item[ii)] 
$\alpha^{k+1}:=\argmin\limits_{\alpha\in\cA^n}
\left(B(\alpha)V^k-c_g(\alpha)\right)$.\\
Set $k:=k+1$ and go to (i)
\end{itemize}
Outputs: $V^{k+1}$.
\begin{center}
\line(1,0){380}\\
\end{center}
\caption{Pseudo-code of HA}\label{HA}
\end{table}

It is useful to underline the conceptual distinction between the convergence of the algorithm and the convergence of the numerical approximation to the continuous function $v$ as discussed previously. In general, the Howard's algorithm is an acceleration technique for the calculus of the approximate solution, the error with the analytic solution will be depending on the discretization scheme used. \par

To conclude this introductory section let us make two monodimensional basic examples.
\begin{example}[1D, Upwind scheme, Howard's Algorithm]\label{ex1} 
An example for the matrix $B(\alpha)$ and the vector $c_g(\alpha)$ is the easy
case of an upwind explicit Euler scheme in dimension one
\begin{equation}\label{DFf}\left\{
\begin{array}{l}
V_0=g(x_0)\\
   \lambda V_i=\min\limits_{\alpha_i\in
\cA}\left(l(x_i,\alpha_i)+f^+_i(\alpha_i)\frac{V_{i+1}-V_i}{h}+f^-_i(\alpha_i)\frac{V_i-V_{i-1}}{h}\right),\quad i\in\{2,...,N-1\}\\
V_{N}=g(x_{N})
\end{array}\right.
\end{equation}
where $x_i$ is a uniform discrete grid consisting in $N$ knots of distance $h$. Moreover, $f^+_i(\alpha_i)=\max\{0,f(x_i,\alpha_i)\}$ and
$f^-_i(\alpha_i)=\min\{0,f(x_i,\alpha_i)\}$. In this case the system \eqref{HOW} is
\begin{equation*}
B(\alpha)=\left(\begin{array}{ccccc}
1+\frac{\left[f^+_1-f^-_1\right]}{ h\lambda} & -\frac{f^+_1}{h\lambda} & 0 & \cdots & 0
\\
\frac{f^-_2}{h\lambda} & 1+\frac{\left[f^+_2-f^-_2\right]}{h\lambda} &
-\frac{f^+_2}{h\lambda}  & \cdots & 0\\
0 & \ddots & \ddots & \ddots & 0\\
0 & \cdots & \cdots & \frac{f^-_N}{h\lambda} &
1+\frac{\left[f^+_N-f^-_N\right]}{h\lambda}
\end{array} \right),
\end{equation*}
and 
\begin{equation*}\label{ceul}
c_g(\alpha)=\frac{1}{\lambda}\left(\begin{array}{c}
-{f^-_1}\;g(x_0)+{l(x_1,\alpha_1)}\\
{l(x_2,\alpha_2)}\\
\vdots\\
{l(x_{N-1},\alpha_{N-1})}\\
+{f^+_N}\;g(x_{N+1})+{l(x_N,\alpha_{N})}\end{array}
\right) .
\end{equation*}
It is straightforward that the solution of Howard's algorithm, verifying $\min_\alpha B(\alpha)V-c_g=0$, is the solution of \eqref{DFf}.
\end{example}

\begin{example}[1D, Semilagrangian, Howard's Algorithm]\label{ex2} 
If we consider the standard 1D semiLagrangian scheme, the matrix $B(\alpha)$ and the vector $c_g(\alpha)$ are
\begin{equation*}
B(\alpha)=\left(\begin{array}{ccccc}
1-\beta b_1(\alpha_1)& -\beta b_2(\alpha_1) & \cdots & -\beta b_N(\alpha_1)
\\
-\beta b_1(\alpha_2)  & 1-\beta b_2(\alpha_2) &
\cdots &-\beta b_N(\alpha_2) \\
\ddots & \ddots & \ddots & \ddots\\
-\beta b_1(\alpha_N)&  \cdots & -\beta b_{N-1}(\alpha_N) &
1-\beta b_N(\alpha_N) 
\end{array} \right),
\end{equation*}
and 
\begin{equation*}\label{ceul}
c_g(\alpha)=\left(\begin{array}{c}
h l(x_1,\alpha_1)+\beta b_0(\alpha_1)g(x_0)\\
h l(x_2,\alpha_2)\\
\vdots\\
h l(x_{N-1},\alpha_{N-1})\\
h l(x_N,\alpha_N)+\beta b_{N+1}(\alpha_N)g(x_{N+1})\end{array}
\right),
\end{equation*}
where $\beta:=(1-\lambda h)$ and the coefficients $b_i$ are the weights of a chosen interpolation $\mathbb I[V](x_i+h f(x_i,\alpha_j))=\sum_{i=0}^{N+1}b_i(\alpha_j)V_i$.
\end{example}

Despite the good performances of the Policy Algorithm as a \emph{speeding up} technique, in particular in presence of a convenient initialization (as shown for example in \cite{KAF13}) an awkward limit appears naturally:  the necessity to store data of very big size. \par
Just to give an idea of the dimensions of the data managed it is sufficient consider that for a 3D problem solved on a squared grid of side $n$, for example, it would be necessary to manage a $n^3\times n^3$ matrix, task which  becomes soon infeasible,  increasing $n$. This give us an evident motivation to investigate the possibility to solve the problem in parallel, containing the complexity of the sub problems and the memory storage.

\section{Domain Decomposition and Parallel version} \label{sect:2}

The strict relation between  various points of the domain  displayed by equation \eqref{hj}, makes the problem to find a parallel version of the technique, not an easy task to accomplish. The main problem, in particular, will be about passing information between the threads, necessary without a prior knowledge  of the characteristics of the problem. 

Our idea is to  combine the policy iteration algorithm with a domain decomposition principle for HJ equations. Using the theoretical framework of the resolution of Partial Differential Equations on submanifolds, presented for example in \cite{RaoZidani,BBC13}, we consider a decomposition of $\Omega$ on a collection of subdomains:  
\begin{equation}\label{dec}
\Omega:=\medcup_{i=1}^{M_\Omega}
\Omega_i \medcup_{j=1}^{M_\Gamma}\Gamma_j, \quad \mbox{with } \stackrel{\circ}{\Omega}_i\cap\stackrel{\circ}{\Omega}_j=\emptyset, \quad \hbox{ for }i\neq j. 
\end{equation}

Where the interfaces $\Gamma_j$, $j=1,\cdots,M_\Gamma$ are some strata of dimension lower than $d$ defined as the intersection of two subdomains $\overline{\Omega}_i\cap\overline{\Omega}_k$ for $i\neq k$. 

The notion of viscosity solution on the manifold, in this regular case, will be coherent with the definition elsewhere

\begin{definition}
A upper semicontinuos function  $u$ in $\Gamma$ is a subsolution on $\Gamma$ if  for any $\phi\in C^1(\R^{d})$, any $\delta>0$ sufficiently small and any maximum point $x_0\in \Gamma_\delta:=\{x \, s.t. \, |x-y|<\delta, y\in\Gamma\}$ of $x\rightarrow u(x)-\phi(x)$ is verified
$$\lambda \phi(x_0)+H^\delta(x_0,D\phi(x_0))\leq 0,$$
where with $H^\delta(\cdot,\cdot)$ we indicate the Hamiltonian $H$ restricted on $\Gamma_\delta$.\\
The definition of supersolution is made accordingly.
\end{definition}

\begin{remark}
It is useful to underline that, differently from multidomains problems (like the already quoted \cite{BBC13,RaoZidani}) there is no need to use a specific concept of solutions through the interfaces. Thanks to the regularity of the Hamiltonian, the simple definition of viscosity solution on an enlargement of $\Gamma$ (called $\Gamma_\delta$) will be effective; as described by the following result.
\end{remark}

\begin{theorem}\label{T:1}
Let us consider a domain decomposition as stated in \eqref{dec}. The continuous function $\overline{v}:\Omega\rightarrow \R$, verifying, for a $\delta>0$,  in the viscosity sense  the system below
\begin{equation}\label{hjd}
\left\{ \begin{array}{ll}
\lambda \overline{v}(x)+H(x,D\overline{v}(x))=0 &x\in\Omega_i, i=1,...,M_\Omega\\
\lambda \overline{v}(x)+H^\delta(x,D\overline{v}(x))=0 &x\in\Gamma_j, j=1,...,M_\Gamma,\\
\bar{v}(x)=g(x), & x\in\partial \Omega,
\end{array}\right.
\end{equation}
 is coincident with the viscosity solution $v(x)$ of \eqref{hj}.
\end{theorem}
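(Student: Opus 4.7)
My plan is to exploit the uniqueness of the viscosity solution of \eqref{hj}. Under the hypotheses stated in Section \ref{s:1} (continuity and Lipschitz regularity of $f,l$ in the control variable, together with Soner's condition), problem \eqref{hj} admits a unique bounded continuous viscosity solution $v$ and a standard comparison principle holds. It therefore suffices to prove that any function $\overline{v}$ satisfying \eqref{hjd} is itself a viscosity solution of \eqref{hj} on the whole of $\Omega$; uniqueness then forces $\overline{v}=v$.

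To verify the subsolution condition, I would take $\phi\in C^1(\R^d)$ and $x_0\in\Omega$ a strict local maximum of $\overline{v}-\phi$ (the strictness being obtained up to a standard quadratic perturbation of $\phi$), and split into two cases. If $x_0$ lies in the interior of some subdomain $\Omega_i$, the first line of \eqref{hjd} gives directly $\lambda\phi(x_0)+H(x_0,D\phi(x_0))\leq 0$. If instead $x_0$ lies on an interface $\Gamma_j$, then, since $\Gamma_j\subset\Gamma_\delta$ for every $\delta>0$, I can choose $\delta$ small enough so that $\Gamma_\delta\subset\Omega$ and $x_0$ is still a maximum of $\overline{v}-\phi$ on $\Gamma_\delta$; the second line of \eqref{hjd} then applies, and because $H^\delta$ is by definition just the restriction of $H$ to $\Gamma_\delta$, we have $H^\delta(x_0,D\phi(x_0))=H(x_0,D\phi(x_0))$, which yields the required inequality. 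The supersolution inequality is established in the symmetric way with minima and reversed signs, while the boundary datum is enforced by the third line of \eqref{hjd}. Hence $\overline{v}$ is a viscosity solution of \eqref{hj}, and the conclusion follows.

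The step I expect to require the most care is the interface case, in particular when $x_0$ belongs to a lower-dimensional stratum arising as the intersection of several interfaces $\Gamma_{j_1},\ldots,\Gamma_{j_k}$: one must check that the equation holds \emph{along each} of the relevant $\Gamma_{\delta}$'s, and that the choice of $\delta$ can be made uniformly small so as to preserve the local maximum property simultaneously in all of them. This is where the regularity of $H$ (with no jumps across the interfaces) is crucial: since $H^\delta$ coincides with $H$ at every point of $\Gamma$, there is no discrepancy to reconcile between the different restricted Hamiltonians, and the argument reduces to the standard viscosity-solution bookkeeping. Without such regularity one would have to invoke the more delicate multidomain machinery of \cite{BBC13,RaoZidani}, which motivates Remark 1 of the excerpt.
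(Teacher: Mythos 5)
Your argument is correct, but it runs in the opposite direction from the paper's. The paper proves a comparison (hence uniqueness) principle directly for the decomposed system \eqref{hjd}, by the classical doubling-of-variables device applied to two solutions $\bar u,\bar v$ of \eqref{hjd}, observing that maximum points falling on $\Gamma$ cause no extra difficulty because $H$ does not jump across the interfaces; it then notes that $v$ itself is a solution of \eqref{hjd} and concludes by uniqueness for the decomposed system. You instead show that any solution of \eqref{hjd} is already a viscosity solution of \eqref{hj} on all of $\Omega$ --- using that $\Gamma_\delta$ is a full-dimensional open neighbourhood of the interfaces on which $H^\delta$ coincides with $H$, so the interface condition reduces to the ordinary viscosity condition there --- and then invoke the known uniqueness for \eqref{hj}. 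Both routes hinge on the same key fact (regularity of the Hamiltonian across $\Gamma$), but yours is arguably more economical: it does not re-run the doubling-of-variables argument and does not need to check separately that $v$ solves \eqref{hjd}, whereas the paper's version is self-contained at the level of the decomposed system and makes explicit where a genuine multidomain argument in the spirit of \cite{BBC13,RaoZidani} would be required if $H$ were discontinuous across $\Gamma$. One small caveat in your write-up: since the interfaces generally reach $\partial\Omega$, one cannot choose $\delta$ so that $\Gamma_\delta\subset\Omega$ globally; but the viscosity conditions are local, so it suffices that a neighbourhood of $x_0$ in $\Gamma_\delta$ lies in $\Omega$, which holds for every $x_0\in\Gamma\cap\Omega$ once $\delta$ is small enough, and the same localization disposes of the points lying on several interfaces at once.
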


\begin{proof}
It is necessary  to prove the uniqueness of a continuous viscosity solution for \eqref{hjd}. After that, just invoking the existence and uniqueness results for the solution $v$  (solution of the original problem), and observing that it is also a continuous viscosity solution of the system, from coincidence on the boundary, we get thesis.\par
To prove the uniqueness it is possible to use the classical argument of ``doubling of variables''. We recall the main steps of the technique for the convenience of the reader. For two continuous viscosity solutions $\bar{u},\bar{v}$ of \eqref{hjd} using the auxiliary function
$$ \Phi_\epsilon(x,y):=\bar{u}(x)-\bar{v}(y)-\frac{|x-y|^2}{2\epsilon},$$
which has a maximum point in $(x_\epsilon,y_\epsilon)$, it is easy to see that 
$$\max_{x\in\overline{\Omega}}(\bar{u}-\bar{v})(x)=\max_{x\in\overline{\Omega}}\Phi_\epsilon(x,x)\leq \max_{x,y\in\overline{\Omega}}\Phi_\epsilon(x,y)=\Phi_\epsilon(x_\epsilon,y_\epsilon);$$
now the limit 
$$ \liminf_{\epsilon \rightarrow 0^+} \Phi_\epsilon(x_\epsilon,y_\epsilon)\leq 0,$$
is proved as usual deriving $\Phi_\epsilon$ and using the properties of sub supersolution, (for example, \cite{BCD97} Theo. II.3.1) with the observation that no additional difficulty appears when a subsequence $(x_{\epsilon_n},y_{\epsilon_n})$ is definitely in $\Gamma$ because of the regularity of the Hamiltonian through the same interface; for the possibility to exchange the role between $\bar{u}$ and $\bar{v}$ (both super and subsolutions) we have uniqueness.
\end{proof}


In the following section we propose a parallel algorithm based on the numerical resolution of the decomposed system above.  This technique consists of a two steps iteration: 
\begin{itemize}
\item[(i)] Use Howard's algorithm to solve in parallel ($n$ threads) the nonlinear systems obtained after discretization of \eqref{hjd} on the subdomains $\Omega_i$ (in this step  the values of $V$ are fixed on the boundaries); 
\item[(ii)] Update the values of $V$ on the interfaces of connection $\medcup_j\Gamma_j$ by using Howard's algorithm on the nonlinear system obtained from the second equation of  \eqref{hjd} (in this case the \emph{interior points} of $\Omega_j$ are constant).
\end{itemize}
As it is shown later, this two-step iteration permits the transfer of information through the interfaces performed by the phase (ii). This procedure, anyway, is not priceless, the number of the steps necessary for its resolution will be shown to be higher than the classic algorithm; the advantage will be in the resolution of smaller problems and the possibility of a resolution in parallel. Moreover, the coupling between phase (i) and (ii) produces a succession of results convergent in finite time, in the case of a finite space of controls.\par  The good performances of the algorithm, benefits and weak points  will be discussed in details in Section \ref{s:test}.

\subsection{Parallel Howard's Algorithm}
To describe precisely the algorithm it is necessary to state the following. Let us consider as before a uniform
grid $G:=\{x_j: j\in\cI\}$, the indices set $\cI:=\{0,...,N\}$, and a vector of
all the controls on the knots $\alpha:=(\alpha_1,...,\alpha_N)^T\in\cA^N$. 

The domain $\Omega$ is decomposed as $\Omega:=\cup_{i=1}^n
\Omega_i\cup \Gamma$, where, coherently with above $\Gamma:=\cup_{j=1}^{M_\Gamma}\Gamma_j$; this decomposition induces an similar structure in the
indices set $\cI:=\cI_1\cup\cI_2\cup...\cup...\cI_n\cup\cJ$, where every point
$x_k$ of index in $\cI_i$ is an ``\emph{interior point}'', in the sense that
for every $x_j\in B_{h}(x_k)$ (ball centred in $x_k$ of radius $h$, defined as previously), $j\in \cI_i$, for
every $j\neq k$. The set $\cJ$ is the set of all the ``\emph{border
points}'', which means, for a $i\in\cJ$ we have that there exists at least two
points $x_j$, $x_k\in B_h(x_i)$ such that
$j\in\cI_{j}$ and $k\in\cI_k$ with $j\neq k$.\par 

We will build $n$ discrete subproblems on the subdomains $\Omega_i$ using as described before a monotone, stable and consistent scheme. In this case a discretization
of the Hamiltonian provides, for every subdomain $\Omega_i$, related to points
$x_j$, $j\in\cI_i$, a matrix $\hat B_i(\hat \alpha_i)$ and a vector
$\hat c_i(\hat \alpha_i,\{V_j\}_{j\in\cJ})$. We highlighted here, the dependance of $c_i$
from the border points which are, either, points where there are imposed the
Dirichlet conditions (data of the problem) or points on the interface $\Gamma$
which have to be estimed. \par

Assumed for simplicity that every $\cI_i$ has the same number of $k$ elements, called $\bar k:=card(\cJ)$, we have $k:=\frac{N-\bar k}{n}$, and $\hat B_i(\cdot)\in\cM_{k\times k}$,
$\hat c_i(\cdot,\cdot)\in\R^k$. \par

In resolution over $\Gamma$ we will have a matrix
$\hat B_{n+1}(\hat \alpha_{n+1})$ and a relative vector
$\hat c_{n+1}(\hat \alpha_{n+1},\{V_j\}_{j\in\cI\setminus\cJ})$, in the
spaces, respectively, $\cM_{\bar k\times \bar k}$ and $\R^{\bar k}$. (For
the 1D case, e.g., we can easily verify that $\bar k=n-1$). 
In this framework, the numerical problem after the discretization of equations \eqref{hjd} is the following:
\par 
Find $V:=(V_1,...,V_i,...,V_n,V_{n+1})\in\R^{N}$ with $V_i=\{V_j\in\R^k| \; j\in\I_i\}$ for $ i=1,...,n$  and $V_{n+1}=\{V_j\in\R^{\bar k}|j\in\cJ\}$, solution of the following system of nonlinear equations:
\begin{equation}\label{HD}
\left\{ \begin{array}{l}
\min\limits_{\hat \alpha_i\in\cA^k}\left(\hat B_i(\hat \alpha_i)V_i-\hat c_i(\hat \alpha_i,V_{n+1}
)\right)=0, \qquad i=1,...,n;\\
\min\limits_{\hat \alpha_{n+1}\in\cA^{\bar k}}\left(\hat B_{n+1}(\hat \alpha_{n+1})V_{n+1}-\hat c_{n+1}(\hat \alpha_{
n+1},\{V_{j}\}_{j\in\{1,...n\}})\right)=0.   
\end{array}\right.
\end{equation}
\par

The resolution of first and the second equation of \eqref{HD} will be called respectively \emph{parallel part} and \emph{iterative part} of the method. 
Solving the parallel and the iterative part will be performed alternatively, as a double step solver. The iteration of the algorithm will generate a sequence $V^s\in\R^N$ solution of the two steps system

\begin{equation}\label{HD2}
\left\{ \begin{array}{ll}
\min\limits_{\alpha\in\cA^N}\left(B_i(\alpha)V^{s+2}-c_i(\alpha,V^{s+1}
)\right)=0, & i=1,...,n,\\
\min\limits_{\alpha\in\cA^N}\left(B_{n+1}(\alpha)V^{s+1}-c_{n+1}(\alpha,V^{s})\right)=0, &  \\
V^0=V_0. & 
\end{array}\right. 
\end{equation}
Where $B_i(\cdot)$, $c_i(\cdot,\cdot)$ are the matrices and vectors in $\cM^{N\times N}$, and $\R^N$, containing $\hat B_i(\cdot)$, $\hat c_i(\cdot,\cdot)$ and such to return as solution the argument of $c_i(\alpha,\cdot)$ elsewhere. Evidently, $B_i(\cdot)$  $c_{i}(\cdot,\cdot)$ with $i\in\{1,...,n\}$ are: equal to $\hat B_i$ in the $\{ik,..,(i+1)k-1\}\times \{ik,..,(i+1)k-1\}$ blocks, and equal to the rows $\mathbb I_i$ of the identity matrix elsewhere , $c_i=\hat c_i$ in the $\{ik,..,(i+1)k-1\}$ elements of the vector and $c_i(\cdot,V)=V$ elsewhere, (we call these entries, in the following \emph{identical arguments}); the same, in the $\{nk+1,..,N\}\times \{nk+1,..,N\}$ block, $\{nk,..,N\}$ elements of the vector for $i=n+1$. \par It is clear that, despite this formal presentation, made to simplify the notation in the following, each equation of \eqref{HD2}, negletting the trivial relations, is a nonlinear system on the same dimension than \eqref{HD}. Clearely, a solution of \eqref{HD} is the  fixed point of \eqref{HD2}. 

\par
\begin{remark}
   The convergence of the discrete problem above to the solution of equation \eqref{hjd}, for a consistent, monotone and stable scheme was proved by Souganidis in \cite{S85}), other examples are \cite{CL84,FF13}. It is consequent then, the domain decomposition result stated before gives the theoretical justification to the transition. Different issue will be to show the convergence of the method; point discussed in the following.
\end{remark}

\begin{table}
\begin{center}
\line(1,0){380}\\
{\sc Parallel Howard's Algorithm (PHA)}
\line(1,0){380}\\
\end{center}
Inputs: $\hat B_i(\cdot)$, $\hat c_i\cdot,V^k_{n+1})$ for $i=1,...,n+1$\\
Initialize $V^0\in\R^N$  and $\alpha^0$. \\
Iterate $k\geq 0$:
\begin{enumerate}
\item[1)] \emph{(Parallel Step)} for each $i=1,...,n$ \\
Call (HA) with inputs $B(\cdot)=\hat B_i(\cdot)$ and $c_g(\cdot)=\hat c_i(\cdot,\cdot)$ \\
Get $V^k_i=\{V^k(x_j)|j\in\cI_i\}.$
\item[2)] \emph{(Sequential Step)} \\
Call (HA) with inputs $B(\cdot)=\hat B_{n+1}(\cdot)$ and $c_g(\cdot)=\hat c_{n+1}(\cdot,\{V^k_i\}_{i=\{1,...,n\}})$\\
Get $V^k_{n+1}=\{V^k(x_j)|j\in\cJ\}.$
\item[3)]Compose the solution $V^{k+1}=(V^k_1,...,V^k_n,V^k_{n+1})$ \\
If $\|V^{k+1}-V^k\|_\infty\leq \epsilon$ then \emph{exit}, otherwise go to (1).
\end{enumerate}
Outputs: $V^{k+1}$
\begin{center}
\line(1,0){380}\\
\end{center}
\caption{Pseudo-code of PHA}\label{PHA}
\end{table}

It is evident that such technique can be expressed as 
\begin{equation*}
\left\{
\begin{array}{lll}
F^{h,i}_j(V^{s+2},V^{s+1})=0& & j\in \cI_i, \hbox{ with }i=1,...,n\\
F^{h,n+1}_j(V^{s+1},V^{s})=0 & & j\in \cJ
\end{array}
\right.
\end{equation*}
where, coherently with above $F^{h,i}_j(V,W):=\left[\min\limits_{\alpha\in\cA^N}\left(B_i(\alpha)V-c_i(\alpha,W
)\right)\right]_j$ for $j\in\cI_i$.
\par

\begin{remark}
 The hypotheses (H1*-H2*) will be naturally adapted to the new framework as below:
\begin{itemize}
\item[{\bf (H1)}] \emph{Monotony.} For every choice of two vectors $V,W$ such that, $V\geq W$ (component-wise) then $F^{h,i}_j(V,\cdot)\geq F^{h,i}_j(W,\cdot)$ for all $j\in \{1,...,N\}$, and $i=1,...,n+1$.
\smallskip
\item[{\bf (H2)}] \emph{Stability.} If the data of the problem are finite, for every vector $V$, and every $W$ s.t. $\|W\|_\infty\leq +\infty$, there exists a $C\geq 0$ such that $V$, solution of $F^{h,i}_j(V,W)=0$ with $j\in\{1,...,N\}$ and $i\in\{1,...,n+1\}$, is bounded by $C$ independently from $h$.
\smallskip
\end{itemize}
This will be sufficient, thanks also to H3, to ensure convergence of $(V_1,...,V_{n+1})$ solution of $\eqref{HD}$ to $\bar v$ for $h\rightarrow 0^+$.
\end{remark}

From the assumptions  on the discretization scheme some specific properties of $B_i(\cdot)$ and $c_{i}(\cdot,\cdot)$ can be derived

\begin{proposition}\label{pp}
Let us assume $H1 - H2$. Let state also
\begin{itemize}
\item[{\bf (H4) }]if $W_1\geq W_2$ then  $c_i(\alpha,W_1)\geq c_i(\alpha,W_2)$, for all $i=1,...,n+1$, for all $\alpha\in\cA$.
\end{itemize}
Then it holds true the following.
\begin{enumerate}
\item If invertible, the matrices $ B_i(\alpha)$  are monotone, not null for every $i\in\{1,...,n+1\}$, and for every $\alpha\in \cA\,\cap\, \argmin B_i(\alpha)V-c_i(\alpha,V)$ with $V\in\R^N_+$.\par
\item  If $\|W\|_\infty < +\infty$, we have that for all $i\in\{1,...,n+1\}$ and for every $\alpha\in \cA$, there exists a $C>0$ such that
\begin{equation}\label{bound} \|c_i(\alpha,W)\|_{\infty}\leq C \|B_i(\alpha)\|_\infty. \end{equation}
the same relation holds for $c_i(\cdot,W)$.
\item Called $V^*$ the fixed point of \eqref{HD2}, if we have $V\leq V^*$ (resp. $V\geq V^*$), then there exists a $\alpha\in\cA$ such that, for all $ i=1,...,n+1$,
\begin{equation}\label{comp}
   B_i(\alpha) V-c_i(\alpha,V)\leq 0 \quad \hbox{(resp. } B_i(\alpha) V-c_i(\alpha,V)\geq 0 \hbox{)}.
\end{equation}
\end{enumerate}
\end{proposition}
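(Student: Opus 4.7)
The plan is to address the three parts of the Proposition in turn, in each case exploiting the hypotheses (H1), (H2), (H4) together with the block-decoupled structure of $B_i$ and $c_i$.

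Part~(1) is a verbatim transposition of Proposition~\ref{p:1} to each block $i \in \{1,\dots,n+1\}$. Given $V \in \R^N_+$ and $\bar\alpha \in \argmin_{\alpha}(B_i(\alpha)V - c_i(\alpha,V))$, for any $W \geq V$ componentwise I apply (H1) with the second argument of $F^{h,i}_j$ held fixed at $V$, producing the chain $B_i(\bar\alpha) W - c_i(\bar\alpha, V) \geq F^{h,i}_j(W, V) \geq F^{h,i}_j(V, V) = B_i(\bar\alpha) V - c_i(\bar\alpha, V)$, whence $B_i(\bar\alpha)(W-V) \geq 0$. The contradiction argument with $W-V = e_k$ then rules out negative entries in the columns of $B_i^{-1}(\bar\alpha)$, delivering monotonicity and nontriviality. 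Part~(2) follows directly from stability: fixing $\alpha \in \cA$ and $W$ with $\|W\|_\infty$ finite, (H2) bounds uniformly in $h$ the solution $V$ of $B_i(\alpha) V = c_i(\alpha, W)$ by some constant $C$, and
\[
\|c_i(\alpha, W)\|_\infty = \|B_i(\alpha) V\|_\infty \leq \|B_i(\alpha)\|_\infty \|V\|_\infty \leq C \|B_i(\alpha)\|_\infty,
\]
which is \eqref{bound}; since $c_g$ plays the identical structural role in the non-split scheme, the same passage gives the corresponding bound.

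For Part~(3) I would take $\alpha^*$ to be the blockwise optimal policy at the fixed point, assembled into a single $\alpha^* \in \cA^N$ thanks to the decoupling of $B_i$ and $c_i$ across the subdomains $\Omega_1,\dots,\Omega_n$ and the interface $\Gamma$, so that $B_i(\alpha^*) V^* - c_i(\alpha^*, V^*) = 0$ for all $i$. For $V \leq V^*$ the residual $B_i(\alpha^*) V - c_i(\alpha^*, V)$ vanishes at $V^*$; the claim then reduces to showing $F^{h,i}_j(V,V) \leq 0$, which via the block structure coincides with the value of the original non-split scheme $F^h_j(V)$. Combining (H1) on the first argument of $F^{h,i}_j$ with (H4) on the second, one argues that the composed map $V \mapsto F^h_j(V)$ is monotone non-decreasing, so that $F^h_j(V) \leq F^h_j(V^*) = 0$, and selecting the argmin inside this inequality (blockwise) produces the claimed sign; the case $V \geq V^*$ is entirely symmetric. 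The step I expect to be the main obstacle is exactly this monotonicity of the composed map: (H1) and (H4) act on disjoint portions of the argument of $F^{h,i}_j$ with opposite signs of dependence, and verifying that their contributions combine constructively rather than cancel requires unpacking how the stencil of the discretization routes the $V$-dependence between the interior of each block $\cI_i$ and the interface indices $\cJ$.
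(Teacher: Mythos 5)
Your treatment of parts (1) and (2) matches the paper's in substance. For (2) it is literally the same one-line argument: write $c_i(\alpha,W)=B_i(\alpha)V$ for the solution $V$ of the corresponding linear system and let (H2) bound $\|V\|_\infty$ by $C$. For (1) you take a slightly different route: you freeze the second argument of $F^{h,i}_j$ at $V$ and run the chain of Proposition~\ref{p:1} using only (H1), whereas the paper perturbs both blocks at once, taking $W-V=(W_1,W_2)-(V_1,V_2)\in\R^N_+$ and invoking (H4) to obtain $\hat B_i(\bar\alpha)(W_1-V_1)\geq \hat c_i(\bar\alpha,W_2)-\hat c_i(\bar\alpha,V_2)\geq 0$. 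Both versions end with $B_i(\bar\alpha)(W-V)\geq 0$ and the same column-of-the-inverse contradiction, so the difference is cosmetic (yours spends one hypothesis fewer; the paper's makes visible why (H4) enters the statement).

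Part (3) is where you have a genuine gap, and you flag it yourself: you reduce the claim to the monotonicity of the diagonal map $V\mapsto F^{h,i}_j(V,V)$ and then observe that (H1) (nondecreasing in the first slot) and (H4) (which makes $-c_i(\alpha,\cdot)$ nonincreasing in the second slot) act with opposite signs, so no combination of the two delivers that monotonicity. That diagnosis is correct: the step cannot be closed from (H1) and (H4) alone, and ``unpacking the stencil'' in the way you suggest will not reconcile them. The closure --- which is also what the paper's one-sentence proof of this item is implicitly relying on --- is the identification you state in passing but do not exploit: since $B_i(\cdot)$ and $c_i(\cdot,\cdot)$ are by construction a mere rearrangement of the terms of the undecomposed scheme, for $j\in\cI_i$ one has $F^{h,i}_j(V,V)=F^h_j(V)$ exactly (the interface values moved into $\hat c_i$ reappear with the same coefficients they carried in $B(\alpha)V-c_g(\alpha)$), while for $j\notin\cI_i$ the residual $[B_i(\alpha)V-c_i(\alpha,V)]_j=V_j-V_j$ vanishes identically. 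The monotonicity you need is therefore that of the original scheme, i.e.\ hypothesis (H1*) on $F^h$, not a consequence of the split hypotheses: $V\leq V^*$ gives $F^h_j(V)\leq F^h_j(V^*)=0$ (the fixed point of \eqref{HD2} solves the undecomposed scheme under the same identification), and evaluating at the blockwise argmin --- assembled into a single $\alpha\in\cA^N$ thanks to the decoupling you already noted --- yields \eqref{comp}; the case $V\geq V^*$ is symmetric. Without this identification and the explicit appeal to (H1*), your part (3) does not close.
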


\begin{proof}
To prove $1$ let us just observing that the monotony of $\hat B_i(\cdot)$ is sufficient end necessary for the monotony of $B_i(\cdot)$, (elsewhere $B_i(\cdot)$ is a diagonal block matrix with all the other blocks invertible), then the argument is the same of Proposition \ref{p:1}, starting from two vectors $W-V:=\left(\begin{array}{l}W_1\\ W_2\end{array}\right)-\left(\begin{array}{l}V_1\\ V_2\end{array}\right)\in\R^N_+$ with the only difference that we need assumption H4 to get 
$$\hat B_i(\bar \alpha)(W_1-V_1)\geq \hat c_i(\bar \alpha,W_2 )-\hat c_i(\bar \alpha,V_2 )\geq 0, \quad \forall i=1,...,n+1;$$ or equivalently
$$ B_i(\bar \alpha)(W-V)\geq  c_i(\bar \alpha,W )- c_i(\bar \alpha,V )\geq 0, \quad \forall i=1,...,n+1;$$
then the thesis.\par
To prove 2, it is sufficient to see $c_i(\alpha,W)=B(\alpha)_i V$, then for H2 the thesis.
The proof of 3 is a direct consequence of monotony assumption H1 with the definition of $V^*$ as 
$$   B_i(\alpha) V^*-c_i(\alpha,V^*) = 0, \quad \forall i=1,...,n+1. $$
\end{proof} 
Here we introduce a convergence result for the (PHA) algorithm.
\begin{theorem}\label{t:1}
Assume that the function $\alpha\in\cA^N\rightarrow B_i(\alpha)\in\cM^{N\times N}$, with $B_i(\alpha)$ invertible, and
$(\alpha,x)\in\cA^N\times\R^n\rightarrow c_i(\alpha,x)\in\R^N$  are continuous on the variable $\alpha, x$ for $ i=1,...,n+1$, $\cA$ is a compact set of $\R^d$, and $(H1, H2, H4)$ hold. 

Then there exists a unique $V^*$ in $\R^N$ solution of \eqref{HD}. Moreover, the
sequence $V^k$ generated by the (PHA) \eqref{HD2} has the following properties:
\begin{itemize}
\item[(i)]Every element of the sequence $V^s$ is bounded by a constant $C$, i.e. $\|V^s\|_\infty \leq C <+\infty$.
\item[(ii)]If $V^0\leq V^*$ then $V^s\leq V^{s+1}$ for all $k\geq 0$, vice versa, if $V^0\geq V^*$ then $V^s\geq V^{s+1}$.
\item[(iii)] $V^s\rightarrow V^*$ when $s$ tends to $+\infty$.
\end{itemize}
\end{theorem}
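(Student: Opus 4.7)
The plan is to split the statement into three pieces and handle them in the order: existence and uniqueness of $V^*$, the one-step comparison inequalities yielding (ii), and then the uniform bound (i) together with convergence (iii). The two structural tools used throughout are the monotonicity of each $B_i(\alpha)$ provided by Proposition~\ref{pp}.1 and the comparison-with-$V^*$ estimate of Proposition~\ref{pp}.3, together with H4.

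Existence and uniqueness of $V^*$ can be obtained by repackaging \eqref{HD} as a single Howard-type equation $\min_{\alpha\in\cA^N}(\tilde B(\alpha)V-\tilde c(\alpha))=0$ on $\R^N$: the $V$-dependence inside $\hat c_i(\cdot,V_{n+1})$ and $\hat c_{n+1}(\cdot,\{V_j\})$ arises from stencil or interpolation weights of a monotone scheme, hence is linear in the coupling block and can be absorbed into off-diagonal blocks of an aggregated matrix $\tilde B(\alpha)$. H1 combined with Proposition~\ref{p:1} then yields monotonicity of $\tilde B(\alpha)$ on active controls, and the classical Howard analysis of \cite{BMZ09} (together with the contractivity induced by $\lambda>0$) produces the unique $V^*$.

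The heart of the argument is a one-step comparison. Fix $V^s\le V^*$ and let $\alpha^{s+1}$ realise the minimum in the $(n+1)$-st equation of \eqref{HD2}, so $B_{n+1}(\alpha^{s+1})V^{s+1}=c_{n+1}(\alpha^{s+1},V^s)$, while minimality at $V^*$ gives $B_{n+1}(\alpha^{s+1})V^*\ge c_{n+1}(\alpha^{s+1},V^*)$. Subtracting and invoking H4, one obtains $B_{n+1}(\alpha^{s+1})(V^*-V^{s+1})\ge 0$, and the monotonicity of $B_{n+1}(\alpha^{s+1})$ forces $V^{s+1}\le V^*$. Conversely, Proposition~\ref{pp}.3 applied at $V^s$ yields an $\alpha'$ with $B_{n+1}(\alpha')V^s\le c_{n+1}(\alpha',V^s)$; combined with the minimality inequality $B_{n+1}(\alpha')V^{s+1}\ge c_{n+1}(\alpha',V^s)$ this gives $B_{n+1}(\alpha')(V^{s+1}-V^s)\ge 0$, hence $V^{s+1}\ge V^s$. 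The parallel half-step is treated identically on each block $\hat B_i$, $i=1,\ldots,n$, and the symmetric reasoning covers $V^0\ge V^*$, establishing (ii).

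For (i) I would sandwich: any $V^0$ lies in $[\underline V,\overline V]$ with $\underline V=V^*-M\mathbf{1}\le V^*\le V^*+M\mathbf{1}=\overline V$ for $M$ large. The iteration map is monotone in its input (since $F^{h,i}(V,W)$ is non-decreasing in $V$ by H1 and non-increasing in $W$ by H4, the implicit solution $V=T(W)$ is non-decreasing in $W$), so the orbit from $V^0$ is sandwiched between the monotone orbits from $\underline V$ and $\overline V$, which stay in $[\underline V,V^*]$ and $[V^*,\overline V]$ respectively, giving $\|V^s\|_\infty\le\|V^*\|_\infty+M$. Convergence (iii) then follows because the monotone bounded sequence admits a limit $V^\infty$; continuity of $B_i,c_i$ and compactness of $\cA$ (which makes the Bellman min continuous) identify $V^\infty$ as a fixed point of \eqref{HD}, and uniqueness gives $V^\infty=V^*$; the sandwich transfers convergence to arbitrary initial data. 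The delicate point I anticipate is the reverse inequality $V^{s+1}\ge V^s$: Proposition~\ref{pp}.3 is phrased relative to the unknown $V^*$, so the argument must propagate the ordering $V^s\le V^*$ consistently through both half-steps of \eqref{HD2}, and one has to ensure that the minimiser chosen at one half-step does not spoil the ordering used on the next.
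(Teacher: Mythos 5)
Your handling of the monotonicity step (ii) is essentially the paper's own argument: the same pair of inequalities (exact solvability at the minimising control for the iterate, suboptimality of that control at $V^*$ or at $V^s$ via Proposition~\ref{pp}.3, H4 to order the $c_i$ terms, then monotonicity of $B_i(\alpha)$ to remove the matrix) is what the paper writes, including the auxiliary check that the ordering $V^{s+1}\le V^*$ is propagated to the next half-step. The differences are in the peripheral parts. For existence and uniqueness the paper argues directly: existence from invertibility and monotonicity of the $B_i$, and uniqueness by comparing two fixed points $V,W$ through a hybrid vector $W^*$ and the same one-step comparison mechanism; your alternative — aggregating \eqref{HD} into a single Howard system and invoking \cite{BMZ09} — works for the concrete schemes in the paper but quietly assumes that $W\mapsto c_i(\alpha,W)$ is \emph{linear}, so that the coupling can be absorbed into off-diagonal blocks of an aggregated matrix; linearity is not among H1, H2, H4 (H4 only gives monotonicity in $W$), whereas the paper's pairwise comparison needs nothing beyond H4, so on this point the paper's route is the more robust one and you could in any case recover uniqueness with the comparison tools you already deploy for (ii). For the bound (i) the paper is more economical: H2 is by design a stability statement for the half-step map, so $\|V^{s-1}\|_\infty<\infty$ implies $\|V^s\|_\infty\le C$ and induction from a bounded $V^0$ finishes; your sandwich between the monotone orbits started at $V^*\pm M\mathbf{1}$ is correct (the input-monotonicity of the half-step map that it requires is proved exactly as in (ii)) but heavier than necessary. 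Conversely, your treatment of (iii) — monotone bounded orbit, continuity of $B_i,c_i$ and compactness of $\cA$ to identify the limit as a fixed point, uniqueness to conclude, and the sandwich to cover arbitrary initial data — is actually more complete than the paper, whose written proof stops after establishing the monotone ordering and leaves the passage to the limit implicit.
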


\begin{proof}
The existence of a solution comes directly from the monotonicity of the matrices ${B}(\alpha)$, the existence of an inverse and then the existence of a solution of every system of \eqref{HD}. Let us show that such solution is limited as limit of a sequence of vectors of bounded norm.  Observing that, 
\begin{equation*}
\|V^s\|_\infty=\max\left\{\|V_i^s\|_{\infty}\right\}_{i=1,...,n+1}
\end{equation*}
Without loss of generality we assume that $\|V^s\|_{\infty}\equiv
\|V^s_{i^*}\|_\infty$. Considering the problem $$\min_{\alpha\in\cA} B_{i^*}(\alpha)V^s-c(\alpha,V^{s-1})=0,$$ we have for H2 that if $V^{s-1}$ is bounded then $\|V^{s}\|_\infty\leq C$. Adding that $V^0$ is chosen bounded, the thesis follows for induction.

Let us to pass now to prove the uniqueness: taken $V,W\in\R^N$ two solutions of \eqref{HD2}, we define the vector $W^*$ equal to $V$ in the identical arguments of $c_i(\alpha,\cdot)$ and equal to $W$ elsewhere, for a $i\in\{1,...,n+1\}$. We have that, for a control $\beta$ (for Proposition \ref{pp}.3),
$$ B_i(\beta)V-c_i(\beta,V)\geq 0\geq  B_i(\beta)W^*-c_i( \beta,W^*)= B_i(\beta)W-c_i(\beta,V)$$
then $ B_i(\beta)(V-W)\geq 0$ and for monotonicity $V\geq W$. Exchanging the role of $V$ and $W$, and for the arbitrary choice of $i$ (in some arguments the relation above is trivial) we get the thesis.

(i) To prove that $V^k\in\R^N$ is an increasing sequence is sufficient to prove that taken $V_1, V_2\in\R^N$ solution of 
$$\min_{\alpha\in\cA} B_i(\alpha)V_2-c_i(\alpha,V_1)=0 $$
with (the opposite case is analogue) $V_1\leq V^*$, for a choice of $i\in\{1,...,n+1\}$ is such that $V_2\geq V_1$. Let us observe, for a choice of $ \beta\in\cA$ and using \eqref{comp} of  Prop. \ref{pp}
\begin{multline*} 0=\min_{\alpha\in\cA} B_i(\alpha)V_2-c_i(\alpha,V_1)\\ \leq  B_i(\beta)V_2-c_i(\beta,V_1)\leq B_i(\beta)V_2-\left(B_i(\beta)V_1-c_i(\beta,V_1)\right)-c(\beta,V_1)
\end{multline*}
then $B_i(\beta)(V_2-V_1)\geq 0$ then $V_2\geq V_1$.\\
We need also to prove that $V_2\leq V^*$: if it should not be true, then, with a similar argument than above
$$0\geq B_i(\beta) V_2-c_i(\beta,V_1)\geq B_i(\beta) V_2-(B_i(\beta)V_2-c_i(\beta,V_2))-c_i(\beta,V_1)$$
then for H4, $V_1\geq V_2$ which contradicts what stated previously.
\end{proof}

It is also possible to show that the method stops to the fixed point in a finite time. This is an excellent feature of the technique; unfortunately, the estimate which is possible to guarantee is largely for excess and, although important from the theoretical point of view, not so effective to show the good qualities of the method. The performances will checked in the through some tests in the Section \ref{s:test}.

\begin{proposition}
If $Card(A)<+\infty$ and convergence requests of Theorem \ref{t:1} are verified, then $(PHA)$ converges to the solution in less than $Card(A)^N$ iterative steps.
\end{proposition}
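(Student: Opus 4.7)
The plan is to mimic the standard finite-termination argument for the classical Howard's algorithm, leveraging the monotonicity proved in Theorem~\ref{t:1} together with the combinatorial bound coming from $\mathrm{Card}(\cA)<+\infty$.

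First, assume without loss of generality that $V^0\leq V^*$; by Theorem~\ref{t:1}(ii) the sequence $(V^s)$ is then non-decreasing and bounded above by $V^*$, and an equality $V^{s+1}=V^s$ would identify $V^s$ as a fixed point of \eqref{HD2}, hence, by uniqueness, with $V^*$. Thus, as long as the algorithm has not converged, the inequality $V^s\leq V^{s+1}$ is strict in at least one component.

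Next, I would attach to each outer iteration $s$ a policy $\alpha^s\in\cA^N$, namely the concatenation of the argmin outputs of the $n$ parallel calls to (HA) on the subdomains and of the sequential call on the interface $\Gamma$. Since $\cA$ is finite of cardinality $\mathrm{Card}(\cA)$, there are exactly $\mathrm{Card}(\cA)^N$ admissible policies on the grid, so if (PHA) ran for $\mathrm{Card}(\cA)^N$ or more non-trivial steps, by pigeonhole two indices $s_1<s_2$ would share the same policy $\alpha^{s_1}=\alpha^{s_2}$.

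The core of the argument is then to derive a contradiction from such a repetition. With the policy frozen to $\alpha^{s_1}$, the blocks of \eqref{HD2} reduce to linear systems with invertible, monotone matrices $B_i(\alpha^{s_1})$ (Proposition~\ref{pp}.1), whose right-hand sides $c_i(\alpha^{s_1},\cdot)$ are monotone in their second argument by H4. Chasing these relations along the two sub-chains of updates feeding $V^{s_1}$ and $V^{s_2}$, and using that $\alpha^{s_1}$ is simultaneously a minimiser at both steps, I would conclude $V^{s_1}=V^{s_2}$; combined with the strict monotonicity of the iterates between these two indices, this collapses the sequence to a constant from $s_1$ onwards, contradicting the non-convergence assumption. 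I expect this ``same policy implies same iterate'' step to be the main obstacle, because, unlike in the classical HA where $V^k$ is determined by $\alpha^k$ alone, the two-step coupling of (PHA) makes $V^{s+1}$ depend on $V^s$ through the interface values, and this dependence has to be carefully unwound.

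The conclusion then follows immediately: the policies $\alpha^s$ produced before convergence are pairwise distinct and live in a set of cardinality $\mathrm{Card}(\cA)^N$, so (PHA) must stop in fewer than $\mathrm{Card}(\cA)^N$ iterations, as claimed.
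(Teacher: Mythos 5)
Your proposal follows the same route as the paper: monotonicity of the iterates from Theorem~\ref{t:1}, a policy attached to each outer step, a pigeonhole argument over the $\mathrm{Card}(\cA)^N$ admissible policies, closed by the implication ``same policy at two steps $\Rightarrow$ same iterate''. The paper phrases this abstractly (two maps $P$, $Q$ parameterized by $N_P$ and $N_Q$ controls with $N_P+N_Q=N$), but the skeleton is identical, and the paper simply \emph{asserts} the closing implication: ``there exist a $k$ and a $l$ \dots such that $\alpha^k=\alpha^l$, and again $(x^k,y^k)=(x^l,y^l)$''.

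The step you flag as the main obstacle is therefore exactly the step the paper leaves unjustified, and it is a genuine gap rather than a technicality. In the classical algorithm $V^k=B(\alpha^k)^{-1}c_g(\alpha^k)$ is a function of the policy alone, so a repeated policy forces a repeated iterate. In (PHA) the block $V^s_i$ solves $\hat B_i(\hat\alpha^s_i)V^s_i=\hat c_i(\hat\alpha^s_i,V^{s-1}_{n+1})$, so the iterate depends on the previous interface values as well as on the policy; two outer steps can share the same policy and still produce different iterates. Worse, once the policy stabilizes the outer map $V^s\mapsto V^{s+1}$ becomes a fixed monotone affine map, and such a map iterated from below its fixed point typically converges only geometrically, never reaching the fixed point in finitely many steps; the pigeonhole on policies therefore cannot by itself force termination. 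Closing the argument needs an extra ingredient (for instance a one-directional dependence between subdomains and interface, as happens for upwind discretizations of the eikonal equation, or some finiteness property of the set of reachable iterates), which neither you nor the paper supplies. In short, you have faithfully reproduced the paper's argument, including its weak point, and you were right to single out ``same policy implies same iterate'' as the place where the proof is not actually complete.
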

\begin{proof}
The proof is slightly similar to the classic Howard's case (cf. for example \cite{BMZ09}).

Let us consider the abstract formulation $P:x\rightarrow y$, where $P(x)$ is determined by $N_P$ parameter in $A$, and $Q:y\rightarrow x$, where $Q(y)$ is determined by $N_Q$ parameter in $A$. Then if we consider the iteration
\begin{equation}\label{pppp}
\begin{array}{l}
P(x^k)=y^k\\
Q(y^k)=x^{k+1}
\end{array}
\end{equation}
and we suppose (Theorem \ref{t:1}) $x^k\leq x^{k+1}$, $y^k\leq y^{k+1}$; than called $\alpha^k$ the $N_P+N_Q$ variables in $A$ associated to $(x^k,y^k)$ we know that there exist a $k$ and a $l$ where $k<l\leq Card(A)^{N_P+N_Q}$, such that $\alpha^k=\alpha^l$, and again $(x^k,y^k)=(x^l,y^l)$. Afterwards $(x^k,y^k)$ is a fixed point of \eqref{pppp}.\\
To restrict to our case is sufficient identify the process $P$ with the (parallel) resolution on the sub-domains and $Q$ with the iteration on the interfaces between the sub domains.
\end{proof}

\begin{remark}
It is worth to notice that the above estimation is worse than the Classical Howard's case. In fact, the classical algorithm find the solution in $Card(A)^N$, the $(PHA)$ will have the same number of iterative steps. This number has to be multiplied, called $M_1$ the maximum number of nodes in a sub-domain and $M_2$ the number of nodes belonging to the interface, for $Card(A)^{(M_1+M_2)}$ getting, at the end, a total number of simple steps equal to  $Card(A)^{(N+M_1+M_2)}$, much more than the classical case. In this analysis we do not consider anyway, the good point of the decomposition technique, the fact that any computational step is referred to a smaller and simpler problem, with the evident advantages in term of time elapsed in every thread and memory storage needed.
\end{remark}

\section{Performances, tuning parameters}\label{s:test}

\begin{figure}[t]
\begin{center}
\includegraphics[height=4.5cm]{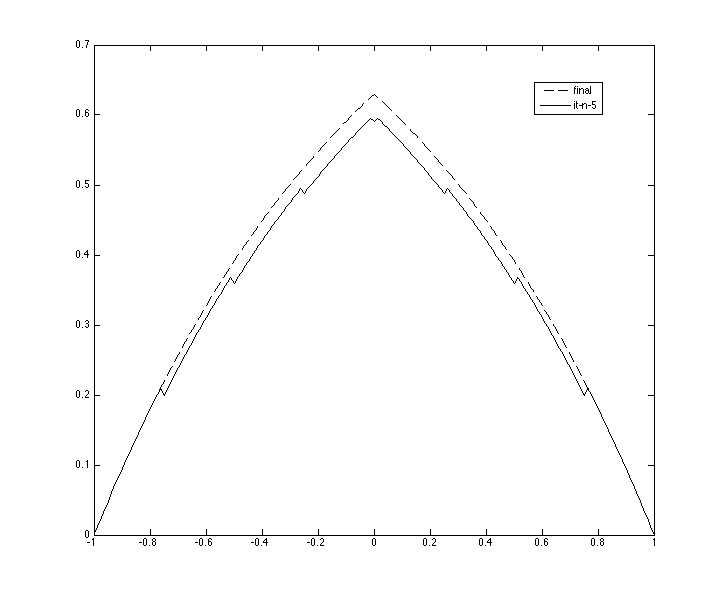}
\includegraphics[height=4.5cm]{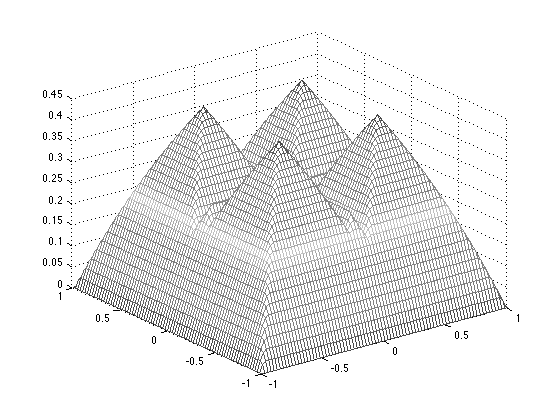}
\caption{Approximated solution of the iterative/parallel algorithm  (left) in the 1D case, final time (dotted) and fifth iteration (solid), in the 2D case (right, 3rd iteration).} \label{fig1}
\end{center}
\end{figure}

The performances of the algorithm and its characteristics as speeding up technique will be tested in this section. We will use a standard academic example where, anyway, there are present all the main characteristics of our technique. 
\paragraph{1D problem}
Consider the monodimensional problem 
\begin{equation}\left\{
\begin{array}{ll}
u(x)+|Du(x)|=1 & x\in (-1,1),\\
u(-1)=u(1)=0.
\end{array} \right.
\end{equation}
It is well known that this equation (\emph{Eikonal equation}) modelize the distance from the boundary of the domain, scaled by an exponential factor (\emph{Kruzkov transform}, cf. \cite{BCD97}).
Through a standard Euler discretization is obtained the problem in the form \eqref{HOW}.
In Table \ref{tt:2} is shown a comparison, in term of speed and efficacy, of our algorithm and the Classic Howard's one, in the case of a two thread resolution. It is possible appreciate as the parallel technique is not convenient in all the situations. This is due to the low number of parallel threads which are not sufficient to justify the construction. In the successive test, keeping fixed the parameter $dx$ and tuning number of threads it is possible to notice how much influential is such variable in terms of efficacy and time necessary for the resolution. \par

\begin{table}[ht]
\begin{center}\caption{Testing performances, 1D. Our method compared with the classic Howard's with two sub-problems. Efficacy compared in terms of time in seconds (t.), iterations (it.) relative to the parallel part of the algorithm (par.p.) and the iterative part (it.p.)}\label{tt:1}

\begin{tabular}{{c}|*{2}{c}|*4{c}}
        &   \multicolumn{2}{c|}{Classic HA}   &   \multicolumn{4}{c}{Parallel HA (2-threads)} \\
\hline
\hline
dx & time    & it. & t. (par. p.) &  it. (par. p.) & t. (it. p.) & Total t.\\
\hline
\bfseries 0.1  & e-3  &  10   &  1e-4& 4 &1e-5 & 1e-3 \\
\bfseries 0.05 & 6e-3 &  20 & 8e-4  &  5& e-5 & 3e-3 \\
\bfseries 0.025  &  0.09 & 40   &  7e-3 &  6 & 2e-5 & 0.04\\
\bfseries 0.0125  &  0.32   &   80  &   0.048 & 8 & 1e-4  &   0.36\\
\bfseries 0.00625 & 2.22 & 160 & 0.34 & 14 & 8e-4 & 3.26 \\
\end{tabular}
\end{center}
\end{table}

\begin{table}[h]
\begin{small}
\begin{center}\caption{Testing performances, 1D. Our method compared with the classic Howard's with various number of threads}\label{tt:2}
\vspace{0.2cm}
\begin{tabular}{{c}|*{2}{c}|*4{c}}

         dx=0.0125     &   \multicolumn{2}{c|}{Classic HA}   &   \multicolumn{4}{c}{Parallel HA} \\
\hline
\hline
threads & t.   & it. & t. (par. p.)  &  it. (par.) & t. (it. p.) & Total t.\\
\hline

\bfseries 2  &   &    &  0.48& 4 &1e-4 & 0.36 \\

\bfseries 4 &  &  & 8e-3  &  6& 1e-4 & 0.086 \\

\bfseries 8  &  0.32 &  80  &  18e-4 &  7 & 6e-4 & 0.014\\

\bfseries 16  &     &    &   7e-4 & 10 & 4e-4  &   \framebox{0.0095}\\

\bfseries 32 &  &  & 2e-4 & 8 & 6e-3 & 0.011 \\

\end{tabular}

\end{center}
\end{small}
\end{table}

In Table \ref{tt:2} we compare the iterations and the time (expressed in seconds as elsewhere in the paper) necessary to reach the approximated solution, analysing in the various phases of the algorithm, time and iterations necessary to solve every sub-problem (first two columns),  time elapsed for the iterative part (which passes the information through the threads, next column), finally the total time. It is highlighted the optimal choice of number of threads (16 thread); it is evident as that number will change with the change of the discretization step $dx$. Therefore it is useful to remark  that an additional work will be necessary to tune the number of threads accordingly to the peculiarities of the problem; otherwise the risk is to is to loose completely the gain obtained through parallel computing and to get worse performances even compared with the classical Howard's algorithm. \par 
As in the rest of the paper all the codes are developed in Mathworks' MATLAB\texttrademark  and performed on a processor 2,8 Ghz Intel Core i7; in the tests the parallelization is simulated.

\begin{table}[h]
\begin{center}\caption{Testing performances, 2D. Comparison with classical method and PH with 4 threads}\label{tt:3}
\begin{tabular}{{c}|*{2}{c}|*5{c}}

          &   \multicolumn{2}{c|}{Classic HA}   &   \multicolumn{5}{c}{Parallel HA (4-threads)} \\
\hline
\hline
dx & t.  & it. & t. (p.p.) &  it. (p.p.) & t. (it.p.) & it. (it.p.) & Total t.\\
\hline
\bfseries 0.1  & 0.05  &  11   &  0.009 & 8 &0.02 & 2 &0.04 \\
\bfseries 0.05 & 2.41 &  21 & 0.05  &  13& 0.03 & 2 & 0.14\\
\bfseries 0.025  &  73.3 & 40    &  2.5 &  22 & 0.15 & 3 & 7.83\\
\bfseries 0.0125 &   $>$e5 & - & 76 & 40 & 1.293 & 5 & 383.3 \\
\end{tabular}
\end{center}
\end{table}

\paragraph{2D problem}
The next test is in a space of higher dimension. Let us consider the approximation of the scaled distance function from the boundary of the square $\Omega:=(-1,1)\times(-1,1)$, solution of the eikonal equation
\begin{equation}\label{EIK}\left\{
\begin{array}{ll}
u(x)+\inf\limits_{a\in B(0,1)}\{-a\cdot Du(x)\}=1 & x\in \Omega,\\
u(x)=0 & x\in \partial \Omega.
\end{array} \right.
\end{equation}
where $B(0,1)\in\R^2$ is the usual unit ball. For the discretization of the problem is used a standard Euler discretization. Similar tests than the 1D case are performed, confirming the good features of our technique and, as already shown, the necessity of an appropriate number of threads with respect to the complexity of the resolution. \par

\begin{figure}[t]
\begin{center}
\includegraphics[height=4.5cm]{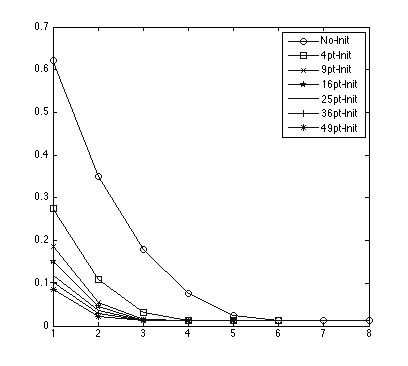}
\includegraphics[height=4.5cm]{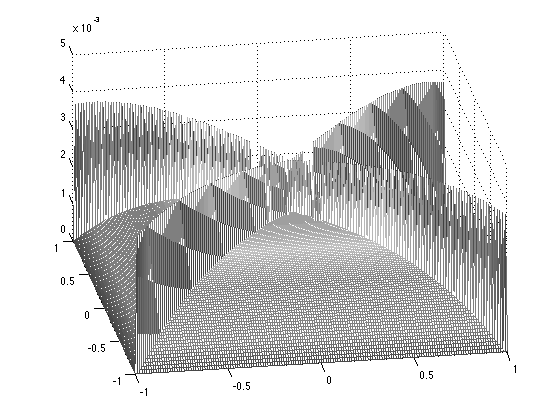}
\caption{Comparison with various initial guess to the speed of convergence of our method in the $L^2$-norm (left) and distribution of the error $dx=0.0125$, $16$ threads (right).  } \label{f:in}
\end{center}
\end{figure}

In Table \ref{tt:3} performances of the Classic Howard's algorithm are compared with our technique. In this case the number of threads are fixed to 4; the Parallel technique is evaluated in terms of: maximum time elapsed in one thread and max number of iterations necessary (first and second columns), time and number of iterations of the iterative part (third and fourth columns) and total time. In both the cases the control set $A:= B(0,1)$ is substituted by a $32-$points discrete version. It is evident, in the comparison, an improvement of the speed of the algorithm even larger than the simpler 1D case. This justifies, more than the 1D case, our proposal.

\begin{table}[th]
\begin{small}
\begin{center}\caption{Testing performances, 2D. Comparing different choices of the number of threads}\label{tt:4}
\vspace{0.2cm}
\begin{tabular}{{c}|*{2}{c}|*4{c}}

         dx=0.025     &   \multicolumn{2}{c|}{Classic HA.}   &   \multicolumn{4}{c}{Parallel HA} \\
\hline
\hline
threads & t.    & it. & t. (par. p.)  &  it. (par.) & t. (it. p.) & Total t.\\
\hline

\bfseries 4  &   &    &  2.5& 22 &0.15 & 7.83 \\

\bfseries 9 &  &  & 0.9 &  18& 0.5 & 5.08 \\

\bfseries 16  &  73.3 &  40  &0.05 &  13 & 1.6 & \framebox{1.826}\\

\bfseries 25  &     &    &   0.03 & 12 & 2.4  &   2.52\\

\bfseries 36 &  &  & 0.016 & 11 & 6.04 & 6.11 \\

\end{tabular}

\end{center}
\end{small}
\end{table}


In the Table \ref{tt:4} are compared  the performances for various choices of the number of threads, for a fixed $dx=0.025$. As in the 1D case is possible to see how an optimal choice of the number of threads can drastically strike down the time of convergence. In Figure \ref{f:in} is possible to see the distribution of the error. As is predictable, the highest concentration will correspond to the non-smooth points of the solution. It is possible to notice also how our technique apparently does not introduce any additional error in correspondence of the interfaces connecting the sub-domains. This is reasonable, although not evident theoretically. In fact, it is possible to prove the convergence of the scheme to the solution of \eqref{hjd} using classical techniques \cite{S85,FF13} but the rate of convergence could be different in the various subproblems, because of the (possibly different) local features of the problem.

\begin{remark}
   As shown in the tests, an important point of weakness of our technique is represented by the iterative part, which can be smaller and therefore easier than the ones solved in the parallel part, but it is highly influential in terms of general performances of the algorithm. In particular the number of the iterations of the coupling iterative-parallel part is sensible to a good initialization of the ``internal boundary'' points. As is shown in Figure \ref{f:in} a right initialization, even obtained on a very coarse grid, affects consistently  the overall performances. In this section, all the tests are made with a initialization of the solution on a $4^d$-points grid, with $d$ dimension of the domain space. The time necessary to compute the initial solution is always negligeable with respect to the global procedure.\par

\end{remark}

\begin{figure}[t]
\begin{center}
\includegraphics[height=4.5cm]{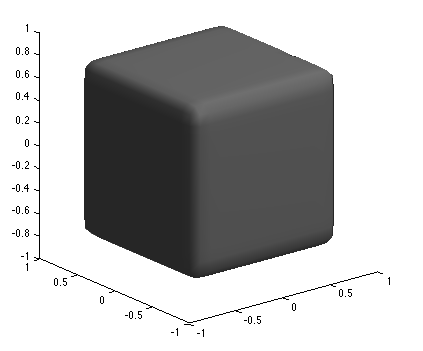}
\includegraphics[height=4.5cm]{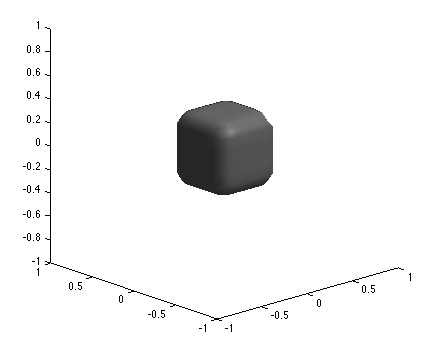}
\caption{Two level sets (corresponding to levels $u(x)=0.192$ (left) $u(x)=0.384$ (right)) of the approximated solution obtained with a $dx=0.1$ and an $8-$threads PHA. } \label{fig3d}
\end{center}
\end{figure}

\begin{table}[h]
\begin{center}\caption{Testing performances, 3D. Comparison with classical method and PI-H with 8 threads}\label{tt:5}
\vspace{0.2cm}
\begin{tabular}{{c}|*{2}{c}|*5{c}}

             &   \multicolumn{2}{c|}{Classic HA}   &   \multicolumn{5}{c}{Parallel HA (8-threads)} \\
\hline
\hline
dx & time  & it. & t. (p. p.) &  it. (p.p.) & t. (it. p.) & it. (it. p.) & Total t.\\
\hline
\bfseries 0.4  & 0.004  &  4   &  0.003 & 4 &0.002 & 1 &0.05 \\
\bfseries 0.2 & 0.22 &  6 & 0.026  &  6& 0.016 & 2 & 0.052\\
\bfseries 0.1  &  164.2 & 11   &  1.102 &  8 & 2.1 & 4 & 6.78\\
\bfseries 0.05 &   $>$e5 & - & 164 & 10 & 4.98 & 3 & 494 \\
\end{tabular}
\end{center}
\end{table}

\begin{figure}[th]
\begin{center}
\includegraphics[height=4.5cm]{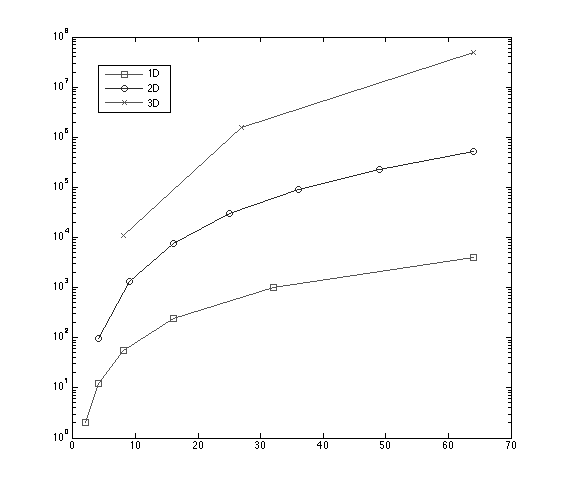}
\includegraphics[height=4.5cm]{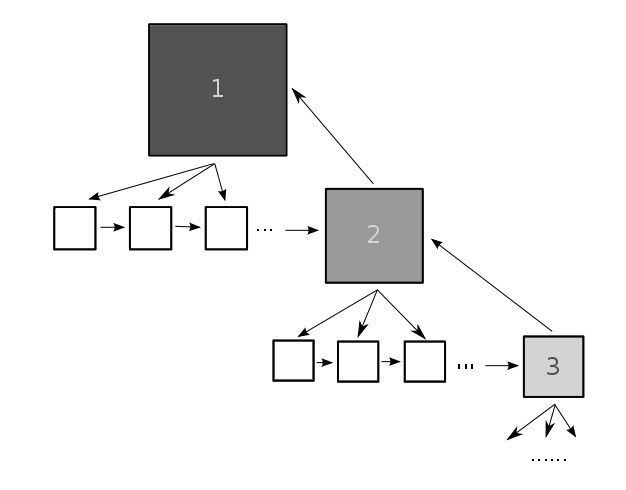}
\caption{Optimal number of splitting for number of variables in the discretization (left) and
iterative structure of the algorithm (right) to reduce the original problem (green) to a fixed number of variables sub-problems (blue).
} \label{fig4}
\end{center}
\end{figure}


\paragraph{3D problem} Analogue results are obtained also in the approximation of a 3D problem. Of course the effects of the increasing number of control points produces a greater complexity and will limit, for a same number of processors available, the possibility of a fine discretization of the domain.\\
Let us consider the domain $\Omega:=[-1,1]^3$ and the equation \eqref{EIK}, where $A:=B(0,1)$, unitary ball in $R^3$. In Figure \ref{fig3d} there are shown two level sets of the  solution obtained. A comparison with the performances of the Classic Howard's algorithm are shown in Table \ref{tt:4}.

\begin{remark}\label{multiloop}
   With the growth of the dimensionality of the problem a special care should be dedicated to the resolution of the iterative step. Suppose to simplify the procedure considering a square domain (in dimension $d=1,2,3,..$ an interval, a square, a cube..) and a successive splitting in equal regular subdomains. Calling $N$ the number of total variables and $N_s$ the number of the splitting (which generates a division in $N_s^d$ subdomains) the number of the elements in every thread of the parallel part is $\frac{N}{(N_s)^d}$, and the number of the variables in the iterative part $\frac{N}{\sqrt[d]{N}}(N_s-1)d$. Clearly the optimal choice of the number of threads  is such that the elements of the iterative part are balanced with the nodes in each subdomain, so it is straight forward to find the following optimal relation between number of splitting and total elements 
$$N=\left(N_s^d(N_s-1)d\right)^d.$$
It is evident that for a very high number of elements, (Figure \ref{fig4}), it is useless to use a great and non optimal number of threads. This contradiction comes from the bottleneck effect of the resolution on the interfaces of communication between the subdomains, indeed the complexity of such subproblem will grow with the number of threads instead to decrease, reducing our possibilities of resolution. The problem can be overcome with an additional parallel decomposition of the iterative pass, permitting us to decompose each subproblem to a complexity acceptable. Imagine to be able to solve (for computational reasons, memory storage, etc.) only problem of dimension ``white square'' (we refer to Figure \ref{fig4}, right) and to want to solve a bigger problem (``square 1'') with an arbitrary number of processors available. Through our technique we will decompose the problem in a finite number of subproblems ``white square'' and a (possibly bigger than the others) problem ``square 2''. We will replicate our parallel procedure for the ``square 2'' obtaining a collection of manageable problems and a ``square 3''. Through a reiteration of this idea we arrive to a decomposition in subproblems of dimension desired.
\end{remark}

\section{Extensions and Special Cases}
In this section there are shown some non trivial extensions to more general situations of the method. We will discuss, in particular, how to adapt the parallelization procedure to the case of a target problem, an obstacle problem and max-min problems, where the special structure of the Hamiltonian requires some cautions and remarks.

\subsection{Target problems}
An important class of problems where is useful to extend the techniques discussed is the Target problems where a trajectory is driven to arrive in a \emph{Target set} $\cT\subset\Omega$ optimizing a cost functional. \par
A easy way to modify our Algorithm to this case is to change the construction procedure for $B$ and $C$:
\begin{equation}\label{ch}
\left[B'(\alpha)\right]_i:=\left\{\begin{array}{ll}
\left[B(\alpha)\right]_i, & \hbox{ if }x_i\notin \cT, \\
\left[\mathbb{I}\right]_i, & \hbox{ otherwise;}\end{array}
\right. \;
c'(\alpha)_i:=\left\{\begin{array}{ll}
c(\alpha)_i, & \hbox{ if }x_i\notin \cT, \\
0, & \hbox{ otherwise;}\end{array}
\right. 
\end{equation}
this, with the classical further construction of \emph{ghost nodes} outside the domain $\Omega$ to avoid the exit of the trajectories from $\Omega$, will solve this case.\par
\begin{remark}
A question arises naturally in this modification: are the convergence results still valid? The answer is not completely  trivial because, for example, a monotone matrix modified as above is not automatically monotone (the easiest counterexample is the identical matrix flipped vertically: it is monotone because invertible and equal to its inverse, but changing any row as in \eqref{ch} we get a non invertible matrix). To prove the convergence it is sufficient to start from the numerical scheme associated to such modified algorithm. It is quite direct to show verified the hypotheses (H1-H4) getting as consequence the described properties of the algorithm.
\end{remark}

\begin{example}[Zermelo's Navigation Problem]
A well known benchmark in the field is the so-called Zermelo's navigation problem, the main feature, in this case, is that the dynamic is driven by a force of comparable power with respect to our control. The target to reach will be a ball of radius equal to $0.005$ centred in the origin, the control is in $ A=B(0,1) $. The other data are:
\begin{equation}
f(x,a)=a+\left(\begin{array}{c} 1-x_2^2 \\0 \end{array} \right), \quad \Omega=[-1,1]^2,\quad \lambda =1, \quad l(x,y,a)=1.
\end{equation}

\begin{figure}[t]
\begin{center}
\includegraphics[height=4.5cm]{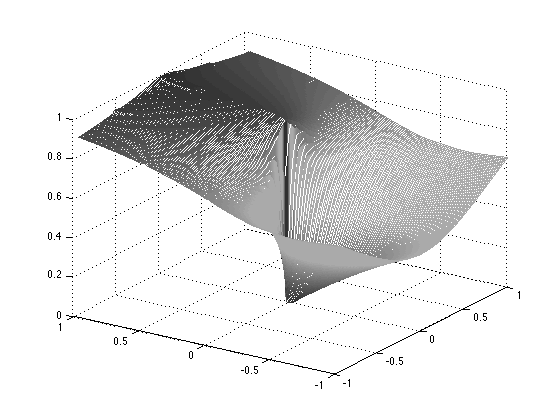}
\includegraphics[height=4.5cm]{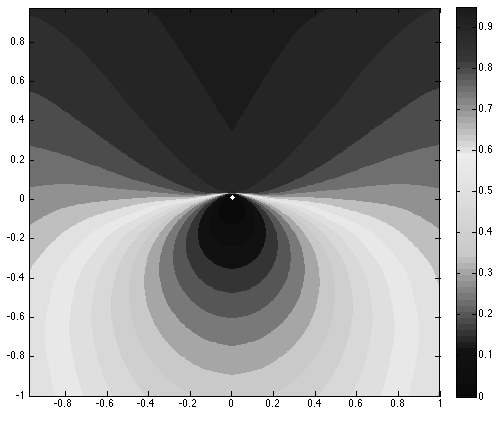}
\caption{Approximated solution for the Zermelo's navigation problem $dx=0,01$.} \label{fig1}
\end{center}
\end{figure}

In Table \ref{tt:6} a comparison with the number of threads chosen is made. Now we are in presence of characteristics not aligned with the grid, but the performances of the method are poorly effected. Convergence is archived with performances comparable to the already described case of the Eikonal Equation.

\begin{table}[th]
\begin{small}
\begin{center}\caption{Zermelo's navigation problem. Comparison of various choices of the number of threads}\label{tt:6}
\vspace{0.2cm}
\begin{tabular}{{c}|*{2}{c}|*4{c}}

         dx=0.025     &   \multicolumn{2}{c|}{Classic HA}   &   \multicolumn{4}{c}{Parallel HA} \\
\hline
\hline
threads & t.   & it. & t. (par. p.) &  it. (par.) & t. (it. p.) & Total t.\\
\hline

\bfseries 4  &   &    &  1.31& 11 &0.13 & 5.4 \\

\bfseries 9 &  &  & 0.7 &  9& 0.7& 4.2 \\

\bfseries 16  &  37.9 &  20  &0.031 &  7 & 1.38 & \framebox{1.53}\\

\bfseries 25  &     &    &   0.02 & 7 & 2.7  &   3.9\\

\bfseries 36 &  &  & 0.01 & 8 & 5.19 & 5.28 \\

\end{tabular}

\end{center}
\end{small}
\end{table}
\end{example}

\subsection{Obstacle Problem}
Dealing with an optimal problem with constraints using the Bellman's approach, various techniques have been proposed. In this section we will consider an implicit representation of the constraints through a level-set function.
Let us to consider the general single obstacle problem 
\begin{equation}\label{hj}
\left\{\begin{array}{ll}
  \max\left( \lambda v(x)+H(x,Dv(x)),v(x)-w(x)\right)=0 & x\in \Omega,\\
  v(x)=g(x) & x\in\partial \Omega,
\end{array}\right.
\end{equation}
where the Hamiltonian $H$ is of the form discussed in Section \ref{s:1} and the standard hypothesis about regularity of the terms involved are verified. The distinctive trait of this formulation is about the term $w(x):\Omega\rightarrow \R$, assumed regular, typically stated as the opposite of the signed distance from the boudary of a subset $K\subset \Omega$. The solution of this problem is coincident, where defined, with the solution of the same problem in the space $\Omega\setminus K$, explaining the name of ``obstacle problem'' (cf. \cite{CLY09}).\par
Through an approximation of the problem in a finite dimensional one, in a similar way as already explained, is found the following variation of the Howard's problem
\begin{equation}\label{HOW2}
\hbox{Find }V\in\R^N;\quad\min_{\alpha\in\A^N}\min(B(\alpha)V-c_g(\alpha),V-W)=0,
\end{equation}
where the term $W$ is a sampling of the function $w$ on the knot of the discretization grid. \par
It is direct to show that changing the definition of the matrix $B$ and $c$, is possible to come back to the problem \eqref{HOW}. Adding an auxiliary control to the set $\cA':=\cA\times \{0,1\}$ and re-defying the matrices $B$ and $c$ as 
\begin{equation}\label{ch1}
\begin{split}
\left[B'(\alpha)\right]_i:=\left\{\begin{array}{ll}
\left[B(\alpha)\right]_i, & \hbox{ if }B(\alpha)V-c_g(\alpha)\geq V-W \\
\left[\mathbb{I}\right]_i, & \hbox{ otherwise;}\end{array}
\right. 
\\
c_g'(\alpha)_i:=\left\{\begin{array}{ll}
c_g(\alpha)_i, & \hbox{ if }B(\alpha)V-c_g(\alpha)\geq V-W \\
W_i, & \hbox{ otherwise;}\end{array}
\right. 
\end{split}
\end{equation}
$$ \hbox{ for }i=1,...,N, $$
(where the $X_i$ is the $i-$row if $X$ is a matrix, and the $i-$ element if $X$ is a vector, and $\mathbb{I}$ is the identity matrix), the problem becomes 
\begin{equation}\label{HOWob}
\hbox{Find }V\in\R^N;\quad\min_{\alpha\in\cA'}(B'(\alpha)V-c_g'(\alpha))=0,
\end{equation}
which is in the form \eqref{HOW}. 
\begin{remark}
Even in this case the verification of Hypotheses (H1-H4) by the numerical scheme associated to the transformation \eqref{ch1} is sufficiently easy. It is in some cases also possible the direct verification of conditions of convergence in the obstacle problem deriving them from the free of constraints case. For example if we have that the matrix $B(\alpha)$ is strictly dominant (i.e. $A_{ij}\leq 0$ for every $j\neq i$, and there exists a $\delta>0$ such that for every $i$, $A_{ii}\geq \delta +\sum_{i\neq j}|A_{ij}|$), then the properties of the terms are automatically verified, (i.e. since all $B_i(\alpha)$ are strictly dominant and thus monotone).  

\end{remark}

\begin{example}[Dubin Car with obstacles]

A classical problem of interest is the optimization of trajectories modelled by 
\begin{equation*}
  f(x,y,z,a) :=
\left(\begin{array}{c}
c \cos(\pi z)\\
c \sin(\pi z)\\
a \end{array} \right), \quad \lambda:= 10^{-6}, \quad l(x,y,z,a):=1;
\end{equation*}
which produces a collection of curves in the plane $(x,y)$ with a constraint in the curvature of the path. Typically this is a simplified model of a car of constant velocity $c$ with a control in the steering wheel.\\
\begin{figure}[t]
\begin{center}
\includegraphics[height=4.5cm]{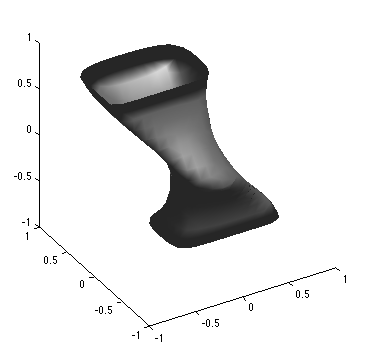}\hspace{0.5cm}
\includegraphics[height=4.5cm]{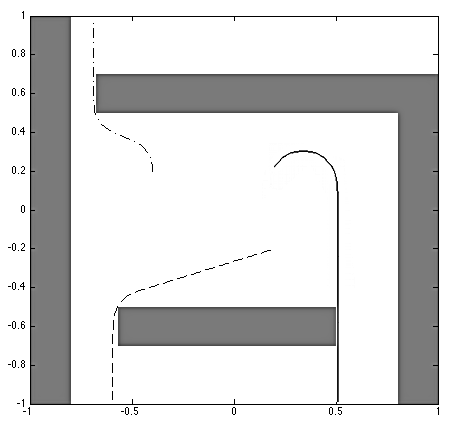}
\caption{Value function of Dubin Car Problem (left, free of constraints) and some optimal trajectories in the case with constraints (right).} \label{f:dub}
\end{center}
\end{figure}
The value function of the exit problem from the domain $\Omega:=(-1,1)^2$, $\mathcal A=[-1,1]$ discretized uniformly in 8 points is presented in Figure \ref{f:dub}. It is natural to imagine the same problem with the presence of constraints. Such problem can be handled with the technique described above producing the results shown in the same Figure \ref{f:dub}, where there are presented some optimal trajectories (in the space $(x,y)$) for the exit from $\Omega:=(-1,1)^2$ in presence of some constraints. From the picture it is possible to notice also the constraint about the minimal radius  of curvature contained in the dynamics.
\end{example}
\subsection{Max-min Problems}
The last, more complicated extension of the Howard's problem \eqref{HOW} is about max-min problems of the form 
\begin{table}[h]
\begin{center}
\line(1,0){380}\\
{\sc PHA (MaxMin case)}
\line(1,0){380}
\end{center}
Initialize $V^0\in\R^N$ $\alpha^0$ for
all $i\in\{1,...,n+1\}$.\\
k:=1;
\begin{enumerate}
\item[1)] Iterate \emph{(Parallel Step)} for every $i=1,...,n$ do: \\
$s:= 0$
\begin{itemize}
\item[1.i)]  Find $V^s_i\in\R^n$ solution of
$F_i^\beta(V^s_i)=0$.\\
If $s\geq 1$ and $V^s_i=V^{s-1}_i$, then $V_i:=V_i^{s}$, and exit (from inner loop). \\
Otherwise go to (1.ii).
\item[1.ii)] 
$\beta_i^{s+1}:=\argmin\limits_{\alpha\in\cA^n}
F_i^\beta(V^s_i)=0$.\\
Set $s:=s+1$ and go to (1.i)
\end{itemize}
\item[2)] Iterate \emph{(Sequential Step)} for $t\geq 0$
\begin{itemize}
\item[2i)] Find $V_{n+1}^t\in\R^h$ solution of
$F_{n+1}^\beta(V^t_{n+1})=0$.\\
If $t\geq 1$ and $V_{n+1}^t=V_{n+1}^{t-1}$, then $V_{n+1}=V^t_{n+1}$, and go to (3).\\ Otherwise go to (2ii).
\item[2ii)]
$\beta_{n+1}^{t+1}:=\argmin\limits_{\beta_{n+1}\in\ccB^h}F_{n+1}^\beta(V_{n+1})=0$.\\
Set $t:=t+1$ and go to (2i)
\end{itemize}
\item[3)]Compose the solution $V^{k+1}=(V_1, V_2, ..., V_n,V_{n+1})$\\
k:=k+1;\\
If $V^{k+1}=V^k$ then \emph{exit}, otherwise go to (1).
\end{enumerate}
\begin{center}
\line(1,0){380}
\end{center}
\caption{Pseudo-code of PHA for MaxMin problems.}\label{Mm}
\end{table}

\begin{equation}\label{HOWMM}
\hbox{Find } V\in\R^N; \quad  \max_{\beta\in\ccB^N}\left(\min_{\alpha\in\cA^N}\left(B(\alpha,\beta) V-c(\alpha,\beta)\right)\right)=0.
\end{equation}
Such a non linear equations arises in various contexts, for example in differential games and in robust control. The convergence of a Parallel algorithm for the resolution of such problem is also discussed in \cite{FS05}.\par
Also in this case, a modified version of the policy iteration algorithm can be shown to be convergent (cf. \cite{BMZ09}). Our aim in this subsection is to give some hints to build a parallel version of such procedure. \par

Let us introduce the function $F_i^{\beta}:\R^n\rightarrow \R$, for $\beta\in\ccB^n$ and $i\in\cI$ defined by
\begin{equation}\label{ff}
F^\beta_i(V):=\min_{\alpha\in\cA^n}(B_i(\alpha,\beta)V-c_i(\alpha,\beta,V)
\end{equation}
The problem \eqref{HOWMM}, in analogy with the previous case, is equivalent to solve the following system of nonlinear equations

\begin{equation}\label{HD3}
\left\{ \begin{array}{ll}
\min\limits_{\beta\in\ccB^k}F_i^\beta(V_i)=0 & i=1,...,n\\
\min\limits_{\beta\in\ccB^h}F_{n+1}^\beta(V_{n+1})=0 & 
\end{array}\right. 
\end{equation}
 
The Parallel Version of the Howard Algorithm in the case of a maxmin problem is summarized in Table \ref{Mm}.

\begin{remark} 
It is worth to notice that at every call of the function $F^\beta$ is necessary to solve a minimization problem over the set $\cA$, this can be performed in an approximated way, using, for instance, the classical Howard's algorithm. This gives to the dimension of this set a big relevance on the performances of our technique. For this reason, if the cardinality of $\cA$ (in the case of finite sets) is bigger than $\ccB$, it is worth to pass to the alternative problem $-max_{\alpha\in\cA}\min_{\beta\in\ccB}(B(\alpha,\beta)V-c(\alpha,\beta))$ (here there are used the Isaacs' conditions) before the resolution, inverting in this way, the role of $\cA$ and $\ccB$ in the resolution.
\end{remark}


\begin{example}[A Pursuit-Evasion game]
One of the most known example of max-min problem is the Pursuit evasion game; where two agents have the opposite goal to reduce/postpone the time of capture. The simplest situation is related to a dynamic
\begin{equation*}
  f(x,y,z,a,b) :=
\left(\begin{array}{c}
a_1/2-b_1\\
a_2/2-b_2 \end{array} \right)
\end{equation*}
where controls are taken in the unit ball $\mathcal A=\mathcal B=B(0,1)$ and capture happens when the trajectory is driven to touch the small ball $B(0,\rho)$, ($\rho = 0.15$, in this case). The passage to a Target problem is managed as described previously.
\begin{figure}[t]
\begin{center}
\includegraphics[height=4.5cm]{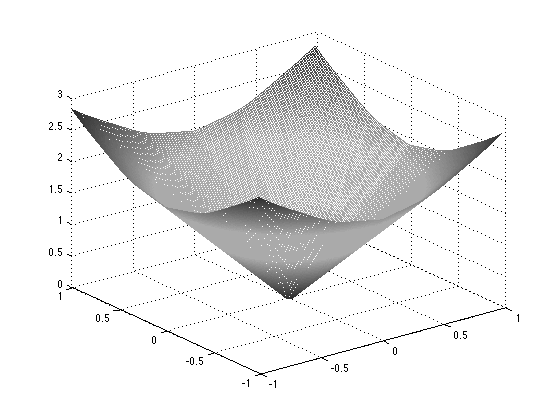}
\includegraphics[height=4.5cm]{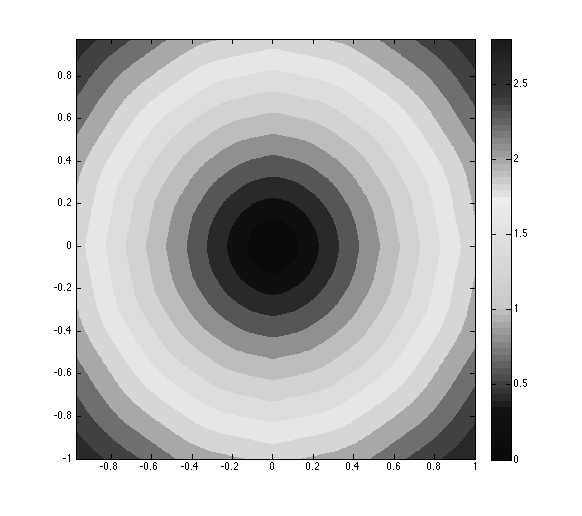}
\vspace{-0.5cm}
\caption{Approximated solution of the Pursuit Evasion game, $dx=0.0125$} \label{f:pe}
\end{center}
\end{figure}
In Figure \ref{f:pe} the approximated value function of that problem is shown.
\end{example}

\section{Conclusions}
The main difficulty in the use of the Howard's Algorithm, i.e. the resolution of big linear systems can be overcome using parallel computing. This is important despite the fact that we must accept an important drawback: the double loop procedure (or multi-loop procedure as sketched in remark \ref{multiloop}) does not permit to archive a superlinear convergence, as in the classical case; we suspect (as in Figure \ref{f:in}) that such rate is preserved looking to the (external) iterative step, where we have to consider, anyway, that in every step of the algorithm a resolution of a reduced problem is needed.\par
Another point influential in the technique is the manner chosen to solve every linear problem which appears in the algorithm. In this paper, being not in our intentions to show a comparison with other competitor methods rather studying the properties of the algorithm in relation of the classical case, we preferred the simplicity, using a routine based on the exact inversion of the matrix. Using of an iterative solver, with  the due caution about the error introduced, better performances are expected (cf. \cite{KAF13}). \par
Through the paper we showed as some basic properties of the schemes used to discretized the problem bear to sufficient conditions for the convergence of the algorithm proposed, this choice was made to try to keep our analysis as general as possible. A special treatment about the possibility of a domain decomposition in presence of non monotone schemes is possible, although not investigated here.

\section*{Acknowledgements}
This work was supported by the European Union under the 7th Framework Programme FP7-PEOPLE-2010-ITN SADCO, Sensitivity Analysis for Deterministic Controller Design.\\
The author thanks Hasnaa Zidani of the UMA Laboratory of ENSTA for the discussions and the support in developing the subject.

\end{document}